 \newtheorem{theorem}{Theorem}[section]
 \newtheorem{lemma}[theorem]{Lemma}
 \newtheorem{proposition}[theorem]{Proposition}
 \theoremstyle{definition}
 \theoremstyle{remark}
 \numberwithin{equation}{section}
\newcommand{\R}{\mathbb{R}}
\newcommand{\Z}{\mathbb{Z}}
\newcommand{\cB}{\mathcal{B}}
\newcommand{\cK}{\mathcal{K}}
\newcommand{\cM}{\mathcal{M}}
\newcommand{\Op}{\operatorname{Op}}
\begin{document}
%
%
%
%
%
%
%
%
%
\title[PDO on Variable Lebesgue Spaces]
{Pseudodifferential Operators\\
 on Variable Lebesgue Spaces}
\author[A.~Yu.~Karlovich]{Alexei Yu. Karlovich}
\address{
Departamento de Matem\'atica\\
Faculdade de Ci\^encias e Tecnologia\\
Universidade Nova de Lisboa\\
Quinta da Torre\\
2829--516 Caparica\\
Portugal}
\email{oyk@fct.unl.pt}

\author[I.~M.~Spitkovsky]{Ilya M. Spitkovsky}
\address{
Department of Mathematics\\
College of William \& Mary\\
Williamsburg, VA, 23187-8795\\
U.S.A.} \email{ilya@math.wm.edu}

\subjclass{Primary 47G30; Secondary 42B25, 46E30}

\keywords{%
Pseudodifferential operator,
H\"ormander symbol,
slowly oscillating symbol,
variable Lebesgue space,
Hardy-Littlewood maximal operator,
Fefferman-Stein sharp maximal operator,
Fredholmness.}

\dedicatory{To Professor Vladimir Rabinovich on the occasion of
his 70th birthday}
\begin{abstract}
Let $\mathcal{M}(\mathbb{R}^n)$ be the class of bounded away from
one and infinity functions $p:\mathbb{R}^n\to[1,\infty]$ such that
the Hardy-Littlewood maximal operator is bounded on the variable
Lebesgue space $L^{p(\cdot)}(\mathbb{R}^n)$. We show that if $a$
belongs to the H\"ormander class $S_{\rho,\delta}^{n(\rho-1)}$
with $0<\rho\le 1$, $0\le\delta<1$, then the pseudodifferential
operator $\Op(a)$ is bounded on the variable Lebesgue space
$L^{p(\cdot)}(\R^n)$ provided that $p\in\cM(\R^n)$. Let
$\mathcal{M}^*(\mathbb{R}^n)$ be the class of variable exponents
$p\in\mathcal{M}(\mathbb{R}^n)$ represented as
$1/p(x)=\theta/p_0+(1-\theta)/p_1(x)$ where $p_0\in(1,\infty)$,
$\theta\in(0,1)$, and $p_1\in\mathcal{M}(\mathbb{R}^n)$. We prove
that if $a\in S_{1,0}^0$ slowly oscillates at infinity in the
first variable, then the condition
\[
\lim_{R\to\infty}\inf_{|x|+|\xi|\ge R}|a(x,\xi)|>0
\]
is sufficient for the Fredholmness of $\Op(a)$ on $L^{p(\cdot)}(\R^n)$
whenever $p\in\cM^*(\R^n)$. Both theorems generalize pioneering
results by Rabinovich and Samko \cite{RS08} obtained for globally
log-H\"older continuous exponents $p$, constituting a proper subset
of $\mathcal{M}^*(\mathbb{R}^n)$.
\end{abstract}
\maketitle
\section{Introduction}
We denote the usual operators of first order partial
differentiation on $\R^n$ by
$\partial_{x_j}:=\partial/\partial_{x_j}$. For every multi-index
$\alpha=(\alpha_1,\dots,\alpha_n)$ with non-negative integers
$\alpha_j$, we write
$\partial^\alpha:=\partial_{x_1}^{\alpha_1}\dots\partial_{x_n}^{\alpha_n}$.
Further, $|\alpha|:=\alpha_1+\dots+\alpha_n$, and for each vector
$\xi=(\xi_1,\dots,\xi_n)\in\R^n$, define
$\xi^\alpha:=\xi_1^{\alpha_1}\dots\xi_n^{\alpha_n}$ and
$\langle\xi\rangle:=(1+|\xi|_2^2)^{1/2}$ where $|\xi|_2$ stands
for the Euclidean norm of $\xi$.

Let $C_0^\infty(\R^n)$ denote the set of all infinitely
differentiable functions with compact support. Recall that, given
$u\in C_0^\infty(\R^n)$, a pseudodifferential operator $\Op(a)$ is
formally defined by the formula
\[
(\Op(a)u)(x):=\frac{1}{(2\pi)^n}\int_{\R^n}d\xi\int_{\R^n}a(x,\xi)u(y)e^{i\langle x-y,\xi\rangle}dy,
\]
where the symbol $a$ is assumed to be smooth in both the spatial variable $x$ and
the frequency variable $\xi$, and satisfies certain growth conditions (see e.g.
\cite[Chap.~VI]{S93}). An example
of symbols one might consider is the class $S_{\rho,\delta}^m$, introduced by
H\"ormander \cite{H67}, consisting of $a\in C^\infty(\R^n\times\R^n)$
with
\[
|\partial_\xi^\alpha\partial_x^\beta a(x,\xi)|\le C_{\alpha,\beta}\langle\xi\rangle^{m-\rho|\alpha|+\delta|\beta|}
\quad (x,\xi\in\R^n),
\]
where $m\in\R$ and $0\le\delta,\rho\le 1$ and the positive constants $C_{\alpha,\beta}$
depend only on $\alpha$ and $\beta$.

The study of pseudodifferential operators $\Op(a)$ with symbols in $S_{1,0}^0$
on so-called variable Lebesgue spaces was started by Rabinovich and Samko \cite{RS08,RS11}.

Let $p:\R^n\to[1,\infty]$ be a measurable a.e. finite function. By
$L^{p(\cdot)}(\R^n)$ we denote the set of all complex-valued
functions $f$ on $\R^n$ such that
\[
I_{p(\cdot)}(f/\lambda):=\int_{\R^n} |f(x)/\lambda|^{p(x)} dx <\infty
\]
for some $\lambda>0$. This set becomes a Banach space when
equipped with the norm
\[
\|f\|_{p(\cdot)}:=\inf\big\{\lambda>0: I_{p(\cdot)}(f/\lambda)\le 1\big\}.
\]
It is easy to see that if $p$ is constant, then $L^{p(\cdot)}(\R^n)$ is nothing but
the standard Lebesgue space $L^p(\R^n)$. The space $L^{p(\cdot)}(\R^n)$
is referred to as a \textit{variable Lebesgue space}.
\begin{lemma}{\em (see e.g. \cite[Theorem~2.11]{KR91} or \cite[Theorem~3.4.12]{DHHR11})}
\label{le:density} If $p:\R^n\to[1,\infty]$ is an essentially
bounded measurable function, then $C_0^\infty(\R^n)$ is dense in
$L^{p(\cdot)}(\R^n)$.
\end{lemma}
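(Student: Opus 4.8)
The plan is to prove density by a three-stage reduction: from an arbitrary $f\in L^{p(\cdot)}(\R^n)$ to a bounded function with bounded support, then to a continuous function with compact support, then to an element of $C_0^\infty(\R^n)$, arranging at every stage that all approximants are supported in one fixed ball, so that each limit reduces to the dominated convergence theorem for Lebesgue integrals. The one structural fact I would record at the outset is a consequence of the hypothesis that $p$ is essentially bounded, with $p_+$ denoting its essential supremum: if $I_{p(\cdot)}(g_k/\lambda_0)\to0$ for some fixed $\lambda_0>0$, then for every $\varepsilon\in(0,\lambda_0]$ one has $I_{p(\cdot)}(g_k/\varepsilon)\le(\lambda_0/\varepsilon)^{p_+}I_{p(\cdot)}(g_k/\lambda_0)\to0$, hence $\|g_k\|_{p(\cdot)}\le\varepsilon$ for all large $k$; thus modular convergence at a single fixed scale forces norm convergence.

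With this in hand, first I would fix $\mu>0$ with $I_{p(\cdot)}(f/\mu)<\infty$ and set $f_N:=f\chi_{E_N}$, where $E_N:=\{x\in\R^n:|x|\le N,\ |f(x)|\le N\}$; since $|(f_N-f)/\mu|^{p(\cdot)}\le|f/\mu|^{p(\cdot)}\in L^1(\R^n)$ and $f_N\to f$ a.e., dominated convergence gives $I_{p(\cdot)}((f_N-f)/\mu)\to0$, whence $\|f_N-f\|_{p(\cdot)}\to0$. Next, for a bounded $g$ with $|g|\le M$ and support in a fixed ball $B$, Lusin's theorem supplies $h_j\in C_0(\R^n)$ with $|h_j|\le M$, support in $B$, and $|\{x:h_j(x)\ne g(x)\}|<1/j$; then, since $h_j-g$ vanishes off $B$ and $|h_j-g|\le2M$ there, $I_{p(\cdot)}((h_j-g)/\lambda)\le\max\{(2M/\lambda)^{p_+},1\}\,|\{x:h_j(x)\ne g(x)\}|\to0$ for any fixed $\lambda>0$, so $\|h_j-g\|_{p(\cdot)}\to0$. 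Finally, for $g\in C_0(\R^n)$ the mollifications $g*\varphi_\varepsilon$ lie in $C_0^\infty(\R^n)$, are supported in a fixed ball for $\varepsilon\le1$, and converge uniformly to $g$ as $\varepsilon\to0$, so once more $I_{p(\cdot)}((g*\varphi_\varepsilon-g)/\lambda)\to0$ and $\|g*\varphi_\varepsilon-g\|_{p(\cdot)}\to0$. Concatenating the three stages yields the claim.

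The hard part — and the one place where essential boundedness of $p$ is genuinely used rather than merely convenient — is precisely the modular-to-norm passage invoked three times above: if $p$ is not essentially bounded the lemma can fail, for instance when $p\equiv\infty$, in which case $L^{p(\cdot)}(\R^n)=L^\infty(\R^n)$ and $C_0^\infty(\R^n)$ is not dense. It is also worth noting that confining all approximants to a single fixed compact set is what allows one to bypass the continuity of translations and the behaviour of mollifiers on $L^{p(\cdot)}(\R^n)$ — properties that in general require boundedness of the Hardy–Littlewood maximal operator, which is \emph{not} assumed in this lemma.
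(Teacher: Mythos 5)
The paper does not prove this lemma; it simply cites \cite[Theorem~2.11]{KR91} and \cite[Theorem~3.4.12]{DHHR11}. Your argument is a correct reconstruction of the standard proof in those references: truncate to a bounded function of bounded support, apply Lusin's theorem to reach $C_0(\R^n)$, then mollify to reach $C_0^\infty(\R^n)$, at each stage passing from modular convergence at a single scale to norm convergence via the bound $I_{p(\cdot)}(g/\varepsilon)\le(\lambda_0/\varepsilon)^{p_+}I_{p(\cdot)}(g/\lambda_0)$, which is precisely where $p_+<\infty$ is used. The closing remarks (the $p\equiv\infty$ counterexample and the observation that confining all approximants to one compact set avoids any appeal to boundedness of $M$ or continuity of translations on $L^{p(\cdot)}$) are accurate and well placed.
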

We will always suppose that
\begin{equation}\label{eq:exponents}
1<p_-:=\operatornamewithlimits{ess\,inf}_{x\in\R^n}p(x),
\quad
\operatornamewithlimits{ess\,sup}_{x\in\R^n}p(x)=:p_+<\infty.
\end{equation}
Under these conditions, the space $L^{p(\cdot)}(\R^n)$ is
separable and reflexive, and its dual space is isomorphic to
$L^{p'(\cdot)}(\R^n)$, where
\[
1/p(x)+1/p'(x)=1 \quad(x\in\R^n)
\]
(see e.g. \cite{KR91} or \cite[Chap.~3]{DHHR11}). 

Given $f\in L_{\rm loc}^1(\R^n)$, the Hardy-Littlewood maximal operator is
defined by
\[
Mf(x):=\sup_{Q\ni x}\frac{1}{|Q|}\int_Q|f(y)|dy
\]
where the supremum is taken over all cubes $Q\subset\R^n$
containing $x$ (here, and throughout, cubes will be assumed to
have their sides parallel to the coordinate axes). By $\cM(\R^n)$
denote the set of all measurable functions $p:\R^n\to[1,\infty]$
such that \eqref{eq:exponents} holds and the Hardy-Littlewood
maximal operator is bounded on $L^{p(\cdot)}(\R^n)$.

Assume that \eqref{eq:exponents} is fulfilled. Diening \cite{D04}
proved that if $p$ satisfies
\begin{equation}\label{eq:log-Hoelder}
|p(x)-p(y)|\le\frac{c}{\log(e+1/|x-y|)}\quad (x,y\in\R^n)
\end{equation}
and $p$ is constant outside some ball, then $p\in\cM(\R^n)$. Further, the behavior
of $p$ at infinity was relaxed by Cruz-Uribe, Fiorenza, and Neugebauer \cite{CFN03,CFN04},
where it was shown that if $p$ satisfies \eqref{eq:log-Hoelder} and
there exists a $p_\infty>1$ such that
\begin{equation}\label{eq:Hoelder-infinity}
|p(x)-p_\infty|\le\frac{c}{\log(e+|x|)}\quad (x\in\R^n),
\end{equation}
then $p\in\cM(\R^n)$. Following \cite[Section~4.1]{DHHR11}, we will
say that if conditions
\eqref{eq:log-Hoelder}--\eqref{eq:Hoelder-infinity} are fulfilled, then
$p$ is {\em globally log-H\"older continuous}.

Conditions \eqref{eq:log-Hoelder} and \eqref{eq:Hoelder-infinity} are optimal
for the boundedness of $M$ in the  pointwise sense; the corresponding
examples are contained in \cite{PR01} and \cite{CFN03}.
However, neither \eqref{eq:log-Hoelder} nor \eqref{eq:Hoelder-infinity}
is necessary for $p\in\cM(\R^n)$. Nekvinda \cite{N04} proved that if $p$
satisfies \eqref{eq:exponents}--\eqref{eq:log-Hoelder} and
\begin{equation}\label{eq:Nekvinda}
\int_{\R^n}|p(x)-p_\infty|c^{1/|p(x)-p_\infty|}\,dx<\infty
\end{equation}
for some $p_\infty>1$ and $c>0$, then $p\in\cM(\R^n)$. One can show that
\eqref{eq:Hoelder-infinity} implies \eqref{eq:Nekvinda}, but the converse,
in general, is not true. The corresponding example is constructed in \cite{CCF07}.
Nekvinda further relaxed condition \eqref{eq:Nekvinda} in \cite{N08}.
Lerner \cite{L05} (see also \cite[Example~5.1.8]{DHHR11})
showed that there exist discontinuous at zero or/and at infinity exponents,
which nevertheless belong to $\cM(\R^n)$. We refer to
the recent monograph \cite{DHHR11} for further discussions concerning the class $\cM(\R^n)$.

Our first main result is the following theorem on the boundedness of pseudodifferential
operators on variable Lebesgue spaces.
\begin{theorem}\label{th:boundedness} Let $0<\rho\le 1$, $0\le\delta<1$, and
$a\in S_{\rho,\delta}^{n(\rho-1)}$. If $p\in\cM(\R^n)$, then
$\Op(a)$ extends to a bounded operator on the variable Lebesgue
space $L^{p(\cdot)}(\R^n)$.
\end{theorem}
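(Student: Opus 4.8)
The plan is to reduce the boundedness of $\Op(a)$ on $L^{p(\cdot)}(\R^n)$ to a known pointwise estimate for the Fefferman--Stein sharp maximal operator, and then to exploit the extrapolation machinery available for the class $\cM(\R^n)$. Recall that for symbols $a\in S_{\rho,\delta}^{n(\rho-1)}$ with $0<\rho\le1$, $0\le\delta<1$, the operator $\Op(a)$ is bounded on the classical spaces $L^r(\R^n)$ for all $r\in(1,\infty)$ (this is the Calder\'on--Vaillancourt/Fefferman-type $L^r$-boundedness; the endpoint exponent $m=n(\rho-1)$ is exactly the one making $\Op(a)$ of weak type $(1,1)$ and $L^r$-bounded for $1<r<\infty$). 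The standard route to weighted bounds is the pointwise inequality
\[
M^\#\big(\Op(a)u\big)(x)\le C\,\big(M(|u|^s)(x)\big)^{1/s}\qquad(x\in\R^n)
\]
for some $s>1$ (in fact any $s>1$ works, and for $\rho=1,\delta=0$ even $s=1$ up to a $\log$ refinement), where $M^\#$ is the sharp maximal operator and $M$ the Hardy--Littlewood maximal operator. This kind of estimate for $S_{\rho,\delta}^{n(\rho-1)}$ is classical and is precisely the tool Rabinovich and Samko used in \cite{RS08}; I would quote it from the pseudodifferential literature (Miller, \'Alvarez--Hounie, or Journ\'e).

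Granting the pointwise sharp-function estimate, the argument runs as follows. First, fix $u\in C_0^\infty(\R^n)$; by Lemma~\ref{le:density} it suffices to prove $\|\Op(a)u\|_{p(\cdot)}\le C\|u\|_{p(\cdot)}$ with $C$ independent of $u$, and then extend by density. Second, since $p\in\cM(\R^n)$, one has $p_-<\infty$, so choose $s$ with $1<s<p_-$; then $p(\cdot)/s\in\cM(\R^n)$ as well (the class $\cM$ is stable under this scaling — see \cite{DHHR11}), hence $M$ is bounded on $L^{(p(\cdot)/s)'}(\R^n)$, and by the Diening/Cruz-Uribe--Fiorenza--Martell--P\'erez extrapolation theorem for variable Lebesgue spaces, the two-weight $A_1$-extrapolation valid because $M\colon L^{p(\cdot)}\to L^{p(\cdot)}$, we get the norm inequality $\|Mg\|_{p(\cdot)}\le C\|g\|_{p(\cdot)}$ and, combined with the sharp-function inequality, a Fefferman--Stein-type bound $\|g\|_{p(\cdot)}\le C\|M^\#g\|_{p(\cdot)}$ valid for $g$ in a suitable dense class with $M g\in L^{p(\cdot)}$. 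Applying this to $g=\Op(a)u$ (which lies in every $L^r$, $1<r<\infty$, and decays, so the Fefferman--Stein inequality applies) yields
\[
\|\Op(a)u\|_{p(\cdot)}\le C\,\big\|M^\#(\Op(a)u)\big\|_{p(\cdot)}
\le C\,\big\|\big(M(|u|^s)\big)^{1/s}\big\|_{p(\cdot)}
= C\,\big\|M(|u|^s)\big\|_{p(\cdot)/s}^{1/s}.
\]
Since $M$ is bounded on $L^{p(\cdot)/s}(\R^n)$ and $\||u|^s\|_{p(\cdot)/s}=\|u\|_{p(\cdot)}^s$, the right-hand side is dominated by $C\|u\|_{p(\cdot)}$, which is the desired estimate.

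The main obstacle is making the Fefferman--Stein inequality $\|g\|_{p(\cdot)}\lesssim\|M^\#g\|_{p(\cdot)}$ rigorous in the variable-exponent setting: it requires both that $Mg\in L^{p(\cdot)}$ a priori (handled here because $\Op(a)u$ is nice) and that $p\in\cM(\R^n)$ forces the good-$\lambda$ / extrapolation argument to go through; the cleanest way is to invoke the variable-exponent extrapolation theorem (as in \cite[Chap.~7]{DHHR11} or Cruz-Uribe--Fiorenza--Martell--P\'erez), which converts the weighted $L^r(w)$ Fefferman--Stein inequality — valid for $w\in A_\infty$ — into the $L^{p(\cdot)}$ statement precisely when $M$ is bounded on $L^{p(\cdot)}$ and on $L^{p'(\cdot)}$ (equivalently, $p\in\cM(\R^n)$). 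A secondary technical point is choosing $s$: the estimate $M^\#(\Op(a)u)\lesssim (M|u|^s)^{1/s}$ holds for every $s>1$, so $1<s<p_-$ is always available since $p_->1$ by \eqref{eq:exponents}. Everything else — density, the identity $\||u|^s\|_{p(\cdot)/s}=\|u\|_{p(\cdot)}^s$, stability of $\cM(\R^n)$ under $p\mapsto p/s$ — is routine.
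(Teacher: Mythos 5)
Your overall strategy coincides with the paper's: combine a pointwise estimate $M^\#(\Op(a)u)\lesssim M_q u$ (due to Michalowski--Rule--Staubach, Theorem~\ref{th:pointwise} above) with a variable-exponent Fefferman--Stein inequality $\|g\|_{p(\cdot)}\lesssim\|M^\#g\|_{p(\cdot)}$ and a density argument. However, there is a genuine gap in how you justify the step that controls $\|M_s u\|_{p(\cdot)}$. You pick an arbitrary $s$ with $1<s<p_-$ and assert that $p(\cdot)/s\in\cM(\R^n)$ ``because the class $\cM$ is stable under this scaling --- see \cite{DHHR11}''. No such stability statement is in \cite{DHHR11}, and for general $p\in\cM(\R^n)$ it is not a routine fact that $M$ is bounded on $L^{p(\cdot)/s}(\R^n)$ for \emph{every} $s\in(1,p_-)$; indeed Theorem~\ref{th:Diening}(c)--(d) (Diening's left-openness) is stated precisely as an existence result --- \emph{there exists} an $s\in(1/p_-,1)$ (equivalently, \emph{some} $q\in(1,\infty)$) for which $M$ is bounded on $L^{sp(\cdot)}(\R^n)$ --- and this is described as a deep theorem, not a free scaling. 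If stability held for all $s\in(1,p_-)$, item (c) would be trivial. The correct move, which is exactly what the paper does, is to invoke Theorem~\ref{th:Diening}(d) to produce one $q\in(1,\infty)$ with $M_q$ bounded on $L^{p(\cdot)}(\R^n)$, and then apply the pointwise estimate of Theorem~\ref{th:pointwise} for that particular $q$ (the estimate holds for any fixed $q>1$, so the order of quantifiers is fine). With that change your argument closes.

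Two secondary remarks. First, you route the Fefferman--Stein bound $\|g\|_{p(\cdot)}\lesssim\|M^\#g\|_{p(\cdot)}$ through Rubio-de-Francia-type extrapolation; this works, but the paper simply cites the Diening--R\r{u}\v zi\v cka result (Theorem~\ref{th:sharp}), valid whenever $p,p'\in\cM(\R^n)$, which is equivalent to $p\in\cM(\R^n)$ by Theorem~\ref{th:Diening}(a)--(b). Second, your aside ``by \dots extrapolation \dots we get $\|Mg\|_{p(\cdot)}\le C\|g\|_{p(\cdot)}$'' is circular --- that inequality is the defining hypothesis $p\in\cM(\R^n)$, not a consequence of extrapolation.
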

The respective result for $a\in S_{1,0}^0$ and $p$ satisfying
\eqref{eq:exponents}--\eqref{eq:Hoelder-infinity} was proved by
Rabinovich and Samko \cite[Theorem~5.1]{RS08}.

Following \cite[Definition~4.5]{RS08}, a symbol $a\in S_{1,0}^m$ is
said to be {\em slowly oscillating at infinity} in the first variable if
\[
|\partial_\xi^\alpha\partial_x^\beta a(x,\xi)| \le
C_{\alpha\beta}(x)\langle\xi\rangle^{m-|\alpha|},
\]
where
\begin{equation}\label{eq:SO}
\lim_{x\to\infty}C_{\alpha\beta}(x)=0
\end{equation}
for all multi-indices $\alpha$ and $\beta\ne 0$. We denote by
$SO^m$ the class of all symbols slowly oscillating at infinity.
Finally, we denote by $SO_0^m$ the set of all symbols $a\in SO^m$,
for which \eqref{eq:SO} holds for all multi-indices $\alpha$ and
$\beta$. The classes $SO^m$ and $SO_0^m$ were introduced by
Grushin \cite{G70}.

We denote by $\cM^*(\R^n)$ the set of all variable exponents
$p\in\cM(\R^n)$ for which there exist constants
$p_0\in(1,\infty)$, $\theta\in(0,1)$, and a variable exponent
$p_1\in\cM(\R^n)$ such that
\[
\frac{1}{p(x)}=\frac{\theta}{p_0}+\frac{1-\theta}{p_1(x)}
\]
for almost all $x\in\R^n$.  Rabinovich and Samko observed in the
proof of \cite[Theorem~6.1]{RS08} that if $p$ satisfies
\eqref{eq:exponents}--\eqref{eq:Hoelder-infinity}, then
$p\in\cM^*(\R^n)$. It turns out that the class $\cM^*(\R^n)$
contains many interesting exponents which are not globally
log-H\"older continuous (see \cite{KS11-example}). In particular,
there exists  $\varepsilon>0$ such that for every $\alpha,\beta$
satisfying $0<\beta<\alpha\le\varepsilon$ the function
\[
p(x)=2+\alpha+\beta\sin\big(\log(\log|x|)\chi_{\{x\in\mathbb{R}^n:|x|\ge e\}}(x)\big)
\quad (x\in\mathbb{R}^n)
\]
belongs to $\mathcal{M}^*(\mathbb{R}^n)$.

As usual, we denote by $I$ the identity operator on a Banach
space. Recall that a bounded linear operator $A$ on a Banach space
is said to be Fredholm if there is an (also bounded linear)
operator $B$ such that the operators $AB-I$ and $BA-I$ are
compact. In that case the operator $B$ is called a {\em
regularizer} for the operator $A$.

Our second main result is the following sufficient condition for
the Fredholmness of pseudodifferential operators on variable
Lebesgue spaces.
\begin{theorem}\label{th:sufficiency}
Suppose $p\in\cM^*(\R^n)$ and $a\in SO^0$. If
\begin{equation}\label{eq:sufficiency-1}
\lim_{R\to\infty}\inf_{|x|+|\xi|\ge R}|a(x,\xi)|>0,
\end{equation}
then the operator $\Op(a)$ is Fredholm on the variable Lebesgue
space $L^{p(\cdot)}(\R^n)$.
\end{theorem}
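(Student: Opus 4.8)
\medskip

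The plan is to derive Theorem~\ref{th:sufficiency} from the boundedness result of Theorem~\ref{th:boundedness} by constructing an approximate inverse (regularizer) for $\Op(a)$ using a parametrix-style symbolic calculus, and to pass from the scale of standard Lebesgue spaces to the variable Lebesgue spaces $L^{p(\cdot)}(\R^n)$ with $p\in\cM^*(\R^n)$ via interpolation. First I would record the symbolic calculus facts needed for $SO^0$: if $a,b\in SO^0$, then $\Op(a)\Op(b)=\Op(ab)+T$ where $ab\in SO^0$ and the remainder $T$ has a symbol whose first-order (and higher) $x$-derivatives decay at infinity — more precisely $T$ is built from symbols in $SO_0^{-1}$ or from symbols in $S_{1,0}^{-1}$ localized where $x$ is large — so that $T$ is compact on $L^{p_0}(\R^n)$ for $p_0\in(1,\infty)$. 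The key algebraic step is: under hypothesis \eqref{eq:sufficiency-1}, choose $R$ so large that $|a(x,\xi)|\ge c>0$ for $|x|+|\xi|\ge R$, pick a cutoff $\chi\in C^\infty$ with $\chi\equiv 1$ for $|x|+|\xi|\le R$ and $\chi\equiv 0$ for $|x|+|\xi|\ge 2R$, and set $b(x,\xi):=(1-\chi(x,\xi))/a(x,\xi)$; then $b\in S_{1,0}^0$ and in fact $b\in SO^0$ after checking that the $x$-derivatives of $b$ inherit the decay \eqref{eq:SO} from those of $a$ on the region $|x|+|\xi|\ge R$ (a quotient rule computation — the point being that wherever $\partial_x b$ is nonzero and $x$ is large, $\xi$ is bounded, so $C_{\alpha\beta}(x)\to 0$ forces decay). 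Then $ab=1-\chi$ and $\Op(b)\Op(a)=I-\Op(\chi)+T_1$, $\Op(a)\Op(b)=I-\Op(\chi)+T_2$, where $\Op(\chi)$ has a symbol of compact $\xi$-support that is smooth and compactly supported in $(x,\xi)$, hence is compact on $L^{p_0}(\R^n)$, and $T_1,T_2$ are compact there as well.

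\medskip

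With a regularizer in hand on $L^{p_0}(\R^n)$, the next step is to upgrade to $L^{p(\cdot)}(\R^n)$ for $p\in\cM^*(\R^n)$. Write $1/p(x)=\theta/p_0+(1-\theta)/p_1(x)$ with $p_0\in(1,\infty)$, $\theta\in(0,1)$, $p_1\in\cM(\R^n)$. Theorem~\ref{th:boundedness} gives that $\Op(a),\Op(b),\Op(\chi)$ and the remainders $T_1,T_2$ are all bounded on $L^{p_1(\cdot)}(\R^n)$ (their symbols lie in $S_{1,0}^0\subset S_{\rho,\delta}^{n(\rho-1)}$ with $\rho=1$, $\delta=0$), and classically also bounded on $L^{p_0}(\R^n)$. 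I would invoke the complex interpolation of variable Lebesgue spaces — namely $[L^{p_0}(\R^n),L^{p_1(\cdot)}(\R^n)]_\theta=L^{p(\cdot)}(\R^n)$ for this type of splitting, which is precisely the property built into the definition of $\cM^*(\R^n)$ and is the device Rabinovich and Samko used — to conclude all these operators are bounded on $L^{p(\cdot)}(\R^n)$. Crucially, compactness also interpolates: an operator compact on $L^{p_0}(\R^n)$ and bounded on $L^{p_1(\cdot)}(\R^n)$ is compact on the interpolation space $L^{p(\cdot)}(\R^n)$ (Krasnoselskii-type interpolation of compactness, valid in this setting). Hence $\Op(\chi)$, $T_1$, $T_2$ are compact on $L^{p(\cdot)}(\R^n)$, and the identities $\Op(b)\Op(a)-I=-\Op(\chi)+T_1$, $\Op(a)\Op(b)-I=-\Op(\chi)+T_2$ exhibit $\Op(b)$ as a regularizer for $\Op(a)$ on $L^{p(\cdot)}(\R^n)$; thus $\Op(a)$ is Fredholm.

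\medskip

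The main obstacle, I expect, is \emph{not} the Fredholm bookkeeping but the two compactness inputs. First, one must establish compactness on $L^{p_0}(\R^n)$ of pseudodifferential operators whose symbols have order $-1$ and whose $x$-derivatives (for $\beta\ne 0$, and ideally for $\beta=0$ after the cutoff) decay at infinity — i.e.\ symbols in $SO_0^{-1}$ or close to it — together with compactness of $\Op(\chi)$ whose symbol is smooth and genuinely compactly supported in $(x,\xi)$; the latter is essentially classical (such operators are even Hilbert-Schmidt, or one uses the factorization through $\langle x\rangle^{-1}\langle D\rangle^{-1}$-type compact embeddings), but the $SO_0^{-1}$ case needs the decay in $x$ to compensate the noncompactness of $\R^n$, so a careful argument approximating by compactly supported symbols and using order $-1$ to gain compactness in $\xi$ is required. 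Second, the interpolation of compactness in the variable-exponent scale must be cited or justified carefully, since the standard Krasnoselskii theorem is stated for ordinary $L^p$ spaces; here one leans on the representation $1/p=\theta/p_0+(1-\theta)/p_1$ and the known complex-interpolation identity for variable Lebesgue spaces, plus a version of the interpolation-of-compactness theorem that applies to the complex method. Once these are settled, the symbol calculus and the passage through $\cM^*(\R^n)$ are routine.
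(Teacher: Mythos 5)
Your proposal is correct and follows essentially the same route as the paper: a cutoff-modified reciprocal $b=(1-\chi)/a\in SO^0$ as regularizer candidate, the $SO$-composition calculus to produce an order $-1$ remainder, compactness of $SO_0^{-1}$-pseudodifferential operators on $L^{p_0}(\R^n)$ (the paper gets this from Grushin's $L^2$ compactness plus Krasnoselskii), and the Cobos--K\"uhn--Schonbek-type one-sided compactness interpolation via the representation $1/p=\theta/p_0+(1-\theta)/p_1$ built into $\cM^*(\R^n)$. The only cosmetic difference is that you treat $\Op(\chi)$ and the remainders $T_1,T_2$ separately, whereas the paper observes $\varphi_R\in C_0^\infty\subset SO_0^{-1}$ and so packages $I-\Op(a)\Op(b_R)=\Op(\varphi_R+c)$ with $\varphi_R+c\in SO_0^{-1}$, letting a single compactness proposition cover everything.
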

As it was the case with Theorem~\ref{th:boundedness}, for $p$
satisfying \eqref{eq:exponents}--\eqref{eq:Hoelder-infinity} this
result was established by Rabinovich and Samko
\cite[Theorem~6.1]{RS08}. Notice that for such $p$ condition
\eqref{eq:sufficiency-1} is also necessary for the Fredholmness
(see \cite[Theorems~6.2 and~6.5]{RS08}). Whether or not the
necessity holds in the setting of Theorem~\ref{th:sufficiency},
remains an open question.

The paper is organized as follows. In Section~\ref{sec:sharp}, the
Diening-R\r{u}\v zi\v cka generalization (see \cite{DR03}) of the
Fefferman-Stein sharp maximal theorem to the variable exponent
setting is stated. Further, Diening's results \cite{D05} on the duality
and left-openness of the class $\cM(\R^n)$  are formulated. In
Section~\ref{sec:pointwise} we discuss a pointwise estimate relating
the Fefferman-Stein sharp maximal operator of $\Op(a)u$ and
$M_qu:=M(|u|^q)^{1/q}$ for $q\in(1,\infty)$ and $u\in C_
0^\infty(\R^n)$. Such an estimate for the range of parameters $\rho$,
$\delta$, and $m=n(\rho-1)$ as in Theorem~\ref{th:boundedness} was
recently obtained by Michalowski, Rule, and Staubach \cite{MRS11}.
Combining this key pointwise estimate with the sharp maximal
theorem and taking into account that $M_q$ is bounded on
$L^{p(\cdot)}(\R^n)$ for some $q\in(1,\infty)$ whenever
$p\in\cM(\R^n)$, we give the proof of Theorem~\ref{th:boundedness}
in Section~\ref{sec:proof}.

Section~\ref{sec:Fredholmness} is devoted to the proof of the
sufficient condition for the Fredholmness of a pseudodifferentail
operator with slowly oscillating symbol. In
Section~\ref{sec:interpolation}, we state analogues of the
Riesz-Thorin and Krasnoselskii interpolation theorems for variable
Lebesgue spaces. Section~\ref{sec:calculus} contains the
composition formula for pseudodifferential operators with slowly
oscillating symbols and the compactness result for
pseudodifferential operators with symbols in $SO_0^{-1}$. Both
results are essentially due to Grushin \cite{G70}.
Section~\ref{sec:sufficiency} contains the proof of
Theorem~\ref{th:sufficiency}. Its outline is as follows. From
\eqref{eq:sufficiency-1} it follows that there exist symbols
$b_R\in SO^0$ and $\varphi_R+c\in SO_0^{-1}$ such that
$I-\Op(a)\Op(b_R)=\Op(\varphi_R+c)$. Since $\varphi_R+c\in
SO_0^{-1}$, the operator $\Op(\varphi_R+c)$ is compact on all
standard Lebesgue spaces. Its compactness on the variable Lebesgue
space $L^{p(\cdot)}(\R^n)$ is proved by interpolation, since it is
bounded on the variable Lebesgue space $L^{p_1(\cdot)}(\R^n)$,
where $p_1$ is the variable exponent from the definition of the
class $\cM^*(\R^n)$. Actually, the class $\cM^*(\R^n)$ is
introduced exactly for the purpose to perform this step. Therefore
$\Op(b_R)$ is a right regularizer for $\Op(a)$ on
$L^{p(\cdot)}(\R^n)$. In the same fashion it can be shown that
$\Op(b_R)$ is a left regularizer for $\Op(a)$. Thus $\Op(a)$ is
Fredholm.
\section{Boundedness of the operator \boldmath{${\rm Op}(a)$}}
\subsection{Lattice property of variable Lebesgue spaces}
We start with the following simple but important property of variable Lebesgue
spaces. Usually it is called the lattice property or the ideal property.
\begin{lemma}{\em (see e.g. \cite[Theorem~2.3.17]{DHHR11})}
\label{le:lattice} Let $p:\R^n\to[1,\infty]$ be a measurable a.e.
finite function. If $g\in L^{p(\cdot)}(\R^n)$, $f$ is a measurable
function, and $|f(x)|\le|g(x)|$ for a.e. $x\in\R^n$, then $f\in
L^{p(\cdot)}(\R^n)$ and $\|f\|_{p(\cdot)}\le \|g\|_{p(\cdot)}$.
\end{lemma}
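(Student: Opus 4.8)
The plan is to derive the statement directly from the monotonicity of the modular $I_{p(\cdot)}$ together with the fact that the norm $\|\cdot\|_{p(\cdot)}$ is defined as an infimum. First I would record the elementary monotonicity of $I_{p(\cdot)}$ with respect to pointwise domination of absolute values: if $|f(x)|\le|g(x)|$ for a.e. $x\in\R^n$, then for every $\lambda>0$ and for a.e. $x\in\R^n$ (namely at the points where $p(x)<\infty$) the function $t\mapsto t^{p(x)}$ is nondecreasing on $[0,\infty)$, so $|f(x)/\lambda|^{p(x)}\le|g(x)/\lambda|^{p(x)}$; integrating over $\R^n$ gives $I_{p(\cdot)}(f/\lambda)\le I_{p(\cdot)}(g/\lambda)$ for all $\lambda>0$.

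With this in hand the membership assertion is immediate: since $g\in L^{p(\cdot)}(\R^n)$, there is $\lambda_0>0$ with $I_{p(\cdot)}(g/\lambda_0)<\infty$, whence $I_{p(\cdot)}(f/\lambda_0)\le I_{p(\cdot)}(g/\lambda_0)<\infty$, so $f\in L^{p(\cdot)}(\R^n)$. For the norm estimate I would compare the defining sets
\[
A_f:=\big\{\lambda>0:I_{p(\cdot)}(f/\lambda)\le 1\big\},
\qquad
A_g:=\big\{\lambda>0:I_{p(\cdot)}(g/\lambda)\le 1\big\}.
\]
The monotonicity just established gives $A_g\subseteq A_f$, and since $g\in L^{p(\cdot)}(\R^n)$ implies $A_g\neq\emptyset$, taking infima yields
\[
\|f\|_{p(\cdot)}=\inf A_f\le\inf A_g=\|g\|_{p(\cdot)}.
\]

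I do not expect any real obstacle: the lemma is a formal consequence of the two definitions. The only point deserving a remark is the a.e. finiteness of $p$, which is precisely what lets one disregard the null set $\{x:p(x)=\infty\}$ both when applying monotonicity of $t\mapsto t^{p(x)}$ and when forming the integral that defines the modular.
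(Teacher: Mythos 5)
Your argument is correct and is precisely the standard proof of the lattice property (the one given in the cited reference \cite[Theorem~2.3.17]{DHHR11}); the paper itself does not reproduce a proof, only the citation. The two-step structure — monotonicity of the modular $I_{p(\cdot)}$ under pointwise domination, then inclusion of the admissible sets $A_g\subseteq A_f$ and comparison of infima — is exactly how this is done, and your remark on a.e.\ finiteness of $p$ is the right thing to flag.
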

\subsection{The Fefferman-Stein sharp maximal function}
\label{sec:sharp}
Let $f\in L^1_{\rm loc}(\mathbb{R}^n)$.  For a cube
$Q\subset\mathbb{R}^n$, put
\[
f_Q:=\frac{1}{|Q|}\int_Q f(x)dx.
\]
The Fefferman-Stein sharp maximal function is defined by
\[
M^\#f(x):=\sup_{Q\ni x}\frac{1}{|Q|}\int_Q|f(x)-f_Q|dx,
\]
where the supremum is taken over all cubes $Q$ containing $x$.

It is obvious that $M^\# f$ is pointwise dominated by $Mf$. Hence,
by Lemma~\ref{le:lattice},
\[
\|M^\#f\|_{p(\cdot)}\le {\rm const}\|f\|_{p(\cdot)}
\quad\mbox{for}\quad f\in L^{p(\cdot)}(\R^n)
\]
whenever $p\in\cM(\R^n)$.
The converse is also true. For constant $p$ this fact goes back to
Fefferman and Stein (see e.g. \cite[Chap.~IV, Section~2.2]{S93}).
The variable exponent analogue of the Fefferman-Stein theorem
was proved by Diening and R\r{u}\v zi\v cka \cite{DR03}.
\begin{theorem}{\em (see \cite[Theorem~3.6]{DR03} or \cite[Theorem~6.2.5]{DHHR11})}
\label{th:sharp}
If $p,p'\in\cM(\R^n)$, then there exists a constant $C_\#(p)>0$
such that for all $f\in L^{p(\cdot)}(\R^n)$,
\[
\|f\|_{p(\cdot)}\le C_\#(p)\|M^\#f\|_{p(\cdot)}.
\]
\end{theorem}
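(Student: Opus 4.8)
\emph{Proof strategy.}
The plan is to derive the inequality by duality from the classical bilinear Fefferman--Stein estimate, using that the maximal operator $M$ is bounded on $L^{p'(\cdot)}(\R^n)$ because $p'\in\cM(\R^n)$. First I would reduce to the case $f\in C_0^\infty(\R^n)$. By Lemma~\ref{le:density}, $C_0^\infty(\R^n)$ is dense in $L^{p(\cdot)}(\R^n)$; choose $f_k\in C_0^\infty(\R^n)$ with $\|f-f_k\|_{p(\cdot)}\to0$. Since $M^\#$ is sublinear and, as already noted in the excerpt, $\|M^\# h\|_{p(\cdot)}\le c\,\|h\|_{p(\cdot)}$ for every $h\in L^{p(\cdot)}(\R^n)$ (here one uses $p\in\cM(\R^n)$ and Lemma~\ref{le:lattice}), the pointwise bound $M^\# f_k\le M^\# f+M^\#(f_k-f)$ together with the lattice property yields $\big|\,\|M^\# f_k\|_{p(\cdot)}-\|M^\# f\|_{p(\cdot)}\,\big|\le c\,\|f_k-f\|_{p(\cdot)}\to0$. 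Hence, if the asserted inequality holds for each $f_k$, it passes to the limit and holds for $f$. From now on $f\in C_0^\infty(\R^n)$, so in particular $Mf\in L^q(\R^n)$ for every $q\in(1,\infty)$, since $Mf(x)\lesssim\langle x\rangle^{-n}$ off the support of $f$.

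Next I would use the norm conjugate formula for variable Lebesgue spaces, valid under \eqref{eq:exponents}: there is a constant $k_p>0$ with
\[
\|f\|_{p(\cdot)}\le k_p\sup\Big\{\int_{\R^n}|f(x)|\,g(x)\,dx:\ g\ge0,\ \|g\|_{p'(\cdot)}\le1\Big\}.
\]
For each admissible $g$, I would invoke the classical Fefferman--Stein bilinear inequality (see \cite[Chap.~IV]{S93}),
\[
\int_{\R^n}|f(x)|\,g(x)\,dx\le C_n\int_{\R^n}M^\# f(x)\,Mg(x)\,dx,
\]
whose hypotheses are satisfied because $f$ has compact support and $Mf\in L^q(\R^n)$ for some finite $q$. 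Applying Hölder's inequality for variable Lebesgue spaces and then the boundedness of $M$ on $L^{p'(\cdot)}(\R^n)$ gives
\[
\int_{\R^n}M^\# f(x)\,Mg(x)\,dx\le 2\,\|M^\# f\|_{p(\cdot)}\,\|Mg\|_{p'(\cdot)}\le 2\,c_{p'}\,\|M^\# f\|_{p(\cdot)}\,\|g\|_{p'(\cdot)}\le 2\,c_{p'}\,\|M^\# f\|_{p(\cdot)}.
\]
Combining the three displays and taking the supremum over $g$ produces $\|f\|_{p(\cdot)}\le 2\,k_p\,C_n\,c_{p'}\,\|M^\# f\|_{p(\cdot)}$, i.e.\ the claim with $C_\#(p)=2k_pC_nc_{p'}$.

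The main obstacle is the careful justification of the classical Fefferman--Stein step — in particular the a priori integrability it requires — and the verification that the passage through $C_0^\infty(\R^n)$ is harmless; both are taken care of by the density argument of the first paragraph (which crucially relies on the already-established reverse bound $\|M^\# h\|_{p(\cdot)}\le c\|h\|_{p(\cdot)}$) and by the elementary decay of $Mf$ for compactly supported $f$. An alternative, fully self-contained route would reprove the Fefferman--Stein estimate directly in the variable setting via a Calderón--Zygmund decomposition and a good-$\lambda$ inequality transplanted to $L^{p(\cdot)}(\R^n)$, but the duality reduction above is considerably shorter once one is permitted to cite \cite{S93}.
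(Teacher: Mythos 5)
The paper does not actually prove Theorem~\ref{th:sharp}; it quotes it from Diening--R\r{u}\v zi\v cka \cite[Theorem~3.6]{DR03} (see also \cite[Theorem~6.2.5]{DHHR11}), so there is no in-paper argument to compare your attempt with.

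Your proof strategy is sound and is essentially the standard route behind the cited result: reduce to $f\in C_0^\infty(\R^n)$ by density (legitimate here precisely because the easy inequality $\|M^\#h\|_{p(\cdot)}\le c\,\|h\|_{p(\cdot)}$, available since $p\in\cM(\R^n)$, makes $M^\#$ norm-continuous), then dualize via the associate-norm formula, plug in the bilinear Fefferman--Stein estimate $\int|f|g\le C\int M^\#f\cdot Mg$, and finish with H\"older's inequality for variable exponents plus the boundedness of $M$ on $L^{p'(\cdot)}(\R^n)$ — which is exactly where the hypothesis $p'\in\cM(\R^n)$ is consumed. Two bookkeeping points should be tightened. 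First, the bilinear inequality is not literally what one finds in \cite[Chap.~IV]{S93}; Stein gives the unweighted form $\|f\|_q\lesssim\|M^\#f\|_q$. What you actually need is the weighted version $\int (Mf)\,w\lesssim\int (M^\#f)\,w$ for $w\in A_\infty$, applied to $w=Mg$ (which lies in $A_1$ with a purely dimensional $A_1$ constant, by Coifman--Rochberg) together with $g\le Mg$ a.e.; you should cite it in that form. Second, that weighted estimate carries its own a priori hypothesis $\int Mf\cdot Mg<\infty$, and it is worth noting explicitly that this is ensured here by H\"older, since $Mf\in L^{p(\cdot)}(\R^n)$ (from $p\in\cM(\R^n)$) and $Mg\in L^{p'(\cdot)}(\R^n)$ (from $p'\in\cM(\R^n)$); the decay $Mf(x)\lesssim\langle x\rangle^{-n}$ you invoke is a safe fallback but not the cleanest justification. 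With those corrections the argument is complete and correct.
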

\subsection{Duality and left-openness of the class \boldmath{$\cM(\R^n)$}}
Let $1\le q<\infty$. Given $f\in L_{\rm loc}^q(\R^n)$, the {\em $q$-th
maximal operator} is defined by
\[
M_qf(x):=\sup_{Q\ni x}\left(\frac{1}{|Q|}\int_Q|f(y)|^q dy\right)^{1/q},
\]
where the supremum is taken over all cubes $Q\subset\R^n$ containing $x$.
For $q=1$ this is the usual Hardy-Littlewood maximal operator. Diening \cite{D05}
established the following deep duality and left-openness result for the class
$\cM(\R^n)$.
\begin{theorem}{\em (see \cite[Theorem~8.1]{D05} or \cite[Theorem~5.7.2]{DHHR11})}
\label{th:Diening}
Let $p:\R^n\to[1,\infty]$ be a measurable function satisfying \eqref{eq:exponents}.
The following statements are equivalent:
\begin{enumerate}
\item[{\rm(a)}]
$M$ is bounded on $L^{p(\cdot)}(\R^n)$;
\item[{\rm(b)}]
$M$ is bounded on $L^{p'(\cdot)}(\R^n)$;
\item[{\rm(c)}]
there exists an $s\in(1/p_-,1)$ such that $M$ is bounded on $L^{sp(\cdot)}(\R^n)$;
\item[{\rm(d)}]
there exists a  $q\in (1,\infty)$ such that $M_q$ is bounded on $L^{p(\cdot)}(\R^n)$.
\end{enumerate}
\end{theorem}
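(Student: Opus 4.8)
The plan is to derive all four equivalences from two genuinely nontrivial implications — the \emph{duality} $(a)\Rightarrow(b)$ and the \emph{self-improvement} $(a)\Rightarrow(c)$ — after disposing of the routine manipulations. For $(d)\Rightarrow(a)$: H\"older's inequality applied on each cube gives $Mf\le M_qf$ pointwise, so the lattice property (Lemma~\ref{le:lattice}) yields $\|Mf\|_{p(\cdot)}\le\|M_qf\|_{p(\cdot)}$. For $(c)\Leftrightarrow(d)$: the elementary homogeneity identity $\big\||g|^{1/q}\big\|_{p(\cdot)}=\|g\|_{p(\cdot)/q}^{1/q}$, together with $M_qf=(M(|f|^q))^{1/q}$, shows that $M_q$ is bounded on $L^{p(\cdot)}$ if and only if $M$ is bounded on $L^{p(\cdot)/q}$; since boundedness of $M$ on a variable Lebesgue space forces the exponent to have essential infimum strictly above $1$, this is possible only for $q<p_-$, and with $s:=1/q\in(1/p_-,1)$ the space $L^{p(\cdot)/q}$ is $L^{sp(\cdot)}$. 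Finally, since \eqref{eq:exponents} also holds for $p'$ and $(p')'=p$, the implication $(b)\Rightarrow(a)$ is nothing but $(a)\Rightarrow(b)$ read for the exponent $p'$. Thus it remains to prove $(a)\Rightarrow(b)$ and $(a)\Rightarrow(c)$.

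The heart of the theorem, and the step I expect to be the main obstacle, is the self-improvement $(a)\Rightarrow(c)$. Here I would follow Diening's approach: introduce an auxiliary class of exponents cut out by a condition of the form $\|\chi_Q\|_{p(\cdot)}\,\|\chi_Q\|_{p'(\cdot)}\le A|Q|$ for all cubes $Q$ (the reverse inequality holds automatically by H\"older's inequality in $L^{p(\cdot)}$). Testing the boundedness of $M$ on functions supported in a fixed cube $Q$ — using $M(\chi_Qf)\ge\big(|Q|^{-1}\int_Qf\big)\chi_Q$ together with the conjugate-norm characterization of $\|\chi_Q\|_{p'(\cdot)}$ coming from the duality $(L^{p(\cdot)})^*\cong L^{p'(\cdot)}$ — shows that $(a)$ forces $p$ into this class, which is manifestly invariant under $p\leftrightarrow p'$. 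The difficult converse — that this condition, upgraded to hold uniformly over a whole neighborhood of exponents, is \emph{sufficient} for $M$ to be bounded — is established by a Calder\'on--Zygmund / stopping-time argument carried out at the level of the modular $\int_{\R^n}|f(x)|^{p(x)}\,dx$: one decomposes the level sets of $Mf$ into their successive generations, estimates $\int(Mf)^{sp(x)}\,dx$ against $\int|f|^{sp(x)}\,dx$ plus an error that is geometrically summable across generations, and closes the estimate by a reverse-H\"older-type gain of integrability — the weightless analogue of the self-improvement $A_q\Rightarrow A_{q-\varepsilon}$ of Muckenhoupt classes. Making this gain quantitative and stable under the rescaling $p\mapsto sp$, $s\uparrow1$, is the delicate point.

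Granting this machinery, $(a)\Rightarrow(b)$ comes for free: the condition characterizing the boundedness of $M$ on $L^{p(\cdot)}$ is symmetric in $p$ and $p'$, so $M$ is also bounded on $L^{p'(\cdot)}$. One might instead try a more direct, Rubio de Francia type route: from $(a)$, with $B:=\|M\|_{L^{p(\cdot)}\to L^{p(\cdot)}}$, form the iteration operator $\mathcal{R}h:=\sum_{k\ge0}(2B)^{-k}M^kh$ (with $M^0h:=|h|$), which satisfies $h\le\mathcal{R}h$, $\|\mathcal{R}h\|_{p(\cdot)}\le2\|h\|_{p(\cdot)}$ and $M(\mathcal{R}h)\le2B\,\mathcal{R}h$ (so $\mathcal{R}h\in A_1$ with a uniform constant), then express $\|Mg\|_{p'(\cdot)}$ through the associate-space duality, replace the test function by its $\mathcal{R}$-image, and estimate the resulting integral by weighted bounds for $M$ on $L^r$-scales with $r>1$. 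The obstruction that $M$ is \emph{not} bounded on $L^1(w)$ even for $w\in A_1$ is exactly what prevents this from closing on its own and forces one back to the self-improvement of the previous paragraph (or to an equivalent gain of integrability).
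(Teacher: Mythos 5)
The paper does not prove this theorem: it is quoted as a known result of Diening with a pointer to \cite[Theorem~8.1]{D05} and \cite[Theorem~5.7.2]{DHHR11}, and the result is then used as a black box in Section~\ref{sec:proof}. There is therefore no ``paper's own proof'' against which to match your argument step by step; the only honest comparison is between your sketch and the proof in the cited sources.

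Your reduction of the elementary implications is correct. $(d)\Rightarrow(a)$ follows from the pointwise bound $Mf\le M_qf$ (Jensen on each cube) plus Lemma~\ref{le:lattice}. The equivalence $(c)\Leftrightarrow(d)$ via the power identity $\bigl\||g|^{1/q}\bigr\|_{p(\cdot)}=\|g\|_{p(\cdot)/q}^{1/q}$ and $M_qf=(M(|f|^q))^{1/q}$ is right; for $(d)\Rightarrow(c)$ you invoke that boundedness of $M$ forces the exponent above $1$ to conclude $q<p_-$, which works, though the monotonicity $M_{q'}\le M_q$ for $q'<q$ gives a cleaner route: one may always shrink $q$ into $(1,p_-)$ first and then set $s=1/q$. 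And $(b)\Rightarrow(a)$ does follow by applying $(a)\Rightarrow(b)$ to $p'$, using $(p')'=p$ and the fact that \eqref{eq:exponents} is self-conjugate. What remains --- $(a)\Rightarrow(b)$ and $(a)\Rightarrow(c)$ --- is, as you say, the genuine content of Diening's theorem, and your description of the route (a symmetric $\mathcal{A}$-type condition on $\|\chi_Q\|_{p(\cdot)}\|\chi_Q\|_{p'(\cdot)}$, necessity by testing $M$, sufficiency via a Calder\'on--Zygmund stopping-time decomposition at the modular level, with a quantitative self-improvement playing the role of reverse H\"older for $A_p$ weights) is a fair gloss of what is done in \cite{D05} and \cite[Chapter~5]{DHHR11}. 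One caveat: the single-cube condition $\|\chi_Q\|_{p(\cdot)}\|\chi_Q\|_{p'(\cdot)}\le A|Q|$ is necessary but not by itself sufficient; Diening's actual class is the stronger averaging condition over arbitrary families of disjoint cubes, and the passage from the weak to the strong form is itself nontrivial. Since you explicitly flag the hard implications as unproved and the paper likewise defers them to the references, this is not a gap relative to the paper; as a standalone proof, however, the core implications remain a sketch rather than an argument.
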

\subsection{The crucial pointwise estimate}
\label{sec:pointwise} One of the main steps in the proof of
Theorem~\ref{th:boundedness} is the following pointwise estimate.
\begin{theorem}{\em (see \cite[Theorem~3.3]{MRS11})}
\label{th:pointwise}
Let $1<q<\infty$ and $a\in S_{\rho,\delta}^m$ with $0<\rho\le 1$, $0\le\delta<1$,
and $m=n(\rho-1)$. For every $u\in C_0^\infty(\R^n)$,
\[
M^\#(\Op(a)u)(x)\le C(q,a)M_qu(x)\quad (x\in\R^n),
\]
where $C(q,a)$ is a positive constant depending only on  $q$  and
the symbol $a$.
\end{theorem}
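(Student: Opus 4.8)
The plan is to follow the classical Fefferman--Stein scheme: bound the oscillation of $\Op(a)u$ over an arbitrary cube $Q$ by a multiple of $M_qu(x)$ for $x\in Q$, with the multiple independent of $Q$. Since $M^\#f(x)\le 2\sup_{Q\ni x}\inf_{c}\frac{1}{|Q|}\int_Q|f-c|$, it suffices to produce, for each cube $Q$ of side $r=\ell(Q)$ and centre $x_Q$, a constant $c_Q$ with $\frac{1}{|Q|}\int_Q|\Op(a)u-c_Q|\le CM_qu(x)$; here and below $C$ depends only on $q$ and $a$ and may change from line to line. Fix a dilate $Q^*$ of $Q$ and write $u=u_1+u_2$ with $u_1=u\chi_{Q^*}$. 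The local piece is routine: $\frac{1}{|Q|}\int_Q|\Op(a)u_1|\le|Q|^{-1/q}\|\Op(a)u_1\|_{L^q}\le C|Q|^{-1/q}\|u_1\|_{L^q}=C\big(\tfrac{1}{|Q|}\int_{Q^*}|u|^q\big)^{1/q}\le CM_qu(x)$, by the $L^q$-boundedness of $\Op(a)$ --- which holds because $|1/q-1/2|<1/2$ forces $n(\rho-1)\le -n(1-\rho)|1/q-1/2|$, so $a$ lies in a symbol class known to yield $L^q$-bounded operators. When $r\ge1$ we take $c_Q=0$: the Schwartz kernel $K$ of $\Op(a)$ satisfies $|K(y,z)|\le C_N|y-z|^{-N}$ for $|y-z|\ge1$, and a dyadic-annulus estimate gives $\frac{1}{|Q|}\int_Q|\Op(a)u_2|\le CM_qu(x)$.

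The content lies in small cubes, $r\le1$, where $|K(y,z)|$ is only of size $|y-z|^{-n}$ at intermediate scales --- the endpoint $m=n(\rho-1)$ gives $(n+m)/\rho=n$ --- hence not locally integrable, and a constant must be subtracted. Decompose $a=\sum_{j\ge0}a_j$ in Littlewood--Paley fashion, $a_j$ supported in $|\xi|\sim2^j$, and split at the scale dictated by the correspondence $2^{-j\rho}\leftrightarrow\ell(Q)$: with $2^{j_0}\sim r^{-1/\rho}$ put $a_{\rm lo}=\sum_{j<j_0}a_j$ and $a_{\rm hi}=\sum_{j\ge j_0}a_j$. A Leibniz computation shows $a_{\rm lo},a_{\rm hi}\in S^{n(\rho-1)}_{\rho,\delta}$ with bounds independent of $r$ (on the support of a cut-off derivative $\langle\xi\rangle\sim2^{j_0}$, and the cost $2^{-j_0}$ of that derivative is $\le2^{-j_0\rho}\sim\langle\xi\rangle^{-\rho}$ since $\rho\le1$), while $a_{\rm lo}$ is $\xi$-supported in $|\xi|\le Cr^{-1/\rho}$. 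Take $c_Q=\Op(a_{\rm lo})u(x_Q)$, which is meaningful since $\Op(a_{\rm lo})u$ is smooth, the symbol being band-limited. The oscillation is then bounded by $\frac{1}{|Q|}\int_Q|\Op(a_{\rm hi})u_1|+\frac{1}{|Q|}\int_Q|\Op(a_{\rm hi})u_2|+\frac{1}{|Q|}\int_Q|\Op(a_{\rm lo})u(y)-\Op(a_{\rm lo})u(x_Q)|\,dy$. The first term is $\le CM_qu(x)$ by the $r$-uniform $L^q$-boundedness of $\Op(a_{\rm hi})$. For the second, the block estimate $|K_j(y,z)|\le C_N2^{jn\rho}(1+2^{j\rho}|y-z|)^{-N}$ (integration by parts in $\xi$) sums to $|K_{\rm hi}(y,z)|\le C_Nr^{N-n}|y-z|^{-N}$ once $|y-z|\ge cr$, so this term is again $\le CM_qu(x)$ by the same dyadic estimate. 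For the third, $\Op(a_{\rm lo})u(y)-\Op(a_{\rm lo})u(x_Q)=\int(K_{\rm lo}(y,z)-K_{\rm lo}(x_Q,z))u(z)\,dz$, and one uses the mean-value bound $|K_{\rm lo}(y,z)-K_{\rm lo}(x_Q,z)|\le C|z-x_Q|^{-n}\min(1,\,r|z-x_Q|^{-1/\rho})$ for $|z-x_Q|\ge cr$ together with $|K_{\rm lo}(y,z)|+|K_{\rm lo}(x_Q,z)|\le Cr^{-n}$ for $|z-x_Q|\le cr$; a dyadic summation over the core $\{|z-x_Q|\le cr\}$ and over the genuinely far range $\{|z-x_Q|\ge r^\rho\}$ --- where $r|z-x_Q|^{-1/\rho}\le1$, so the Calder\'on--Zygmund gain is in force --- gives $CM_qu(x)$.

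The main obstacle is the remaining intermediate range $cr\le|z-x_Q|\le r^\rho$, non-empty exactly when $\rho<1$: there the mean-value gain disappears and $|K_{\rm lo}(y,z)-K_{\rm lo}(x_Q,z)|$ obeys only the critical bound $|z-x_Q|^{-n}$, so term-by-term summation over the $\sim(1-\rho)\log(1/r)$ dyadic shells in this range would cost a logarithm of $\ell(Q)$. Overcoming this requires exploiting the oscillation of the pseudodifferential kernel rather than its modulus --- for instance by recombining the blocks $a_j$ with $r^{-1}\le2^j\le r^{-1/\rho}$ and balancing their geometrically decaying $L^q\to L^q$ norms against their localization at scale $2^{-j\rho}$ --- where the surviving contributions are kept in check precisely by the endpoint order $m=n(\rho-1)$. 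This delicate summation is the technical heart of the estimate; when $\rho=1$ the intermediate range is empty, $S^0_{1,0}$ has an honest Calder\'on--Zygmund kernel, and the whole argument collapses to the classical Fefferman--Stein computation.
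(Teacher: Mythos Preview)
The paper does not contain its own proof of this statement: Theorem~\ref{th:pointwise} is quoted verbatim from \cite[Theorem~3.3]{MRS11} and used as a black box, with only a brief remark on its antecedents (Miller for $S_{1,0}^0$, \'Alvarez--Hounie for $0<\delta\le\rho\le 1/2$). So there is nothing in the present paper to compare your sketch against.

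As to the sketch itself: the overall architecture --- Fefferman--Stein reduction, local/far decomposition of $u$, $L^q$-boundedness for the local piece, kernel size for large cubes, Littlewood--Paley splitting at the scale $2^{j_0}\sim r^{-1/\rho}$ for small cubes, and the choice $c_Q=\Op(a_{\rm lo})u(x_Q)$ --- is exactly the right plan and matches the strategy in \cite{MRS11}. But you correctly locate, and then leave open, the genuine difficulty: on the intermediate annulus $cr\le|z-x_Q|\le r^{\rho}$ the kernel bound you derive is only $|z-x_Q|^{-n}$, which would produce a factor $\log(1/r)$ after dyadic summation. Your closing paragraph describes in words what must happen (``exploiting the oscillation of the pseudodifferential kernel rather than its modulus'', ``recombining the blocks $r^{-1}\le 2^{j}\le r^{-1/\rho}$'') but does not carry it out; as written, the argument is incomplete precisely at the point that distinguishes the endpoint $m=n(\rho-1)$ with $\rho<1$ from the classical $S_{1,0}^0$ case. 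In \cite{MRS11} this is handled by a more refined treatment of the low-frequency kernel difference that avoids the crude mean-value bound on the intermediate shells; without reproducing that step (or an equivalent one), the proposal is a well-oriented outline rather than a proof. A secondary point: your appeal to $L^q$-boundedness of $\Op(a)$ in the range $0\le\delta<1$ with no relation assumed between $\delta$ and $\rho$ is itself nontrivial and should be justified or cited.
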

This theorem generalizes the pointwise estimate by Miller
\cite[Theorem~2.8]{M82} for $a\in S_{1,0}^0$ and by \'Alvarez and
Hounie \cite[Theorem~4.1]{AH90} for $a\in S_{\rho,\delta}^m$ with
the parameters satisfying $0<\delta\le\rho\le 1/2$ and $m\le
n(\rho-1)$.

Let $0<s<1$. One of the main steps in the Rabinovich and Samko's proof \cite{RS08}
of the boundedness on $L^{p(\cdot)}(\R^n)$ of the operator $\Op(a)$ with
$a\in S_{1,0}^0$ is another pointwise estimate
\[
M^\#(|\Op(a)u|^s)(x)\le C[Mu(x)]^s\quad (x\in\R^n)
\]
for all $u\in C_0^\infty(\R^n)$, where $C$ is a positive constant
independent of $u$. It was proved in \cite[Corollary~3.4]{RS08} following
the ideas of \'Alvarez and P\'erez \cite{AP94}, where the same estimate
is obtained for the Calder\'on-Zygmund singular integral operator in place of
the pseudodifferential operator $\Op(a)$.
\subsection{Proof of Theorem~\ref{th:boundedness}}
\label{sec:proof} Suppose $p\in\cM(\R^n)$. Then, by
Theorem~\ref{th:Diening}, $p'\in\cM(\R^n)$ and there exists a
number $q\in(1,\infty)$ such that $M_q$ is bounded on
$L^{p(\cdot)}(\R^n)$. In other words, there exists a positive
constant $\widetilde{C}(p,q)$ depending only on $p$ and $q$ such
that for all $u\in L^{p(\cdot)}(\R^n)$,
\begin{equation}\label{eq:proof-1}
\|M_qu\|_{p(\cdot)}\le \widetilde{C}(p,q)\|u\|_{p(\cdot)}.
\end{equation}
From Theorem~\ref{th:sharp}
it follows that there exists a constant $C_\#(p)$ such that
for all $u\in C_0^\infty(\R^n)$,
\begin{equation}\label{eq:proof-2}
\|\Op(a)u\|_{p(\cdot)}\le C_\#(p)\|M^\#(\Op(a)u)\|_{p(\cdot)}.
\end{equation}
On the other hand, from Theorem~\ref{th:pointwise} and
Lemma~\ref{le:lattice} we obtain that there exists a positive constant
$C(q,a)$, depending only on $q$ and $a$, such that
\begin{equation}\label{eq:proof-3}
\|M^\#(\Op(a)u)\|_{p(\cdot)}\le C(q,a)\|M_qu\|_{p(\cdot)}.
\end{equation}
Combining \eqref{eq:proof-1}--\eqref{eq:proof-3}, we arrive at
\[
\|\Op(a)u\|_{p(\cdot)}\le C_\#(p) C(q,a) \widetilde{C}(p,q)\|u\|_{p(\cdot)}
\]
for all $u\in C_0^\infty(\R^n)$. It remains to recall that $C_0^\infty(\R^n)$
is dense in $L^{p(\cdot)}(\R^n)$ (see Lemma~\ref{le:density}).
\qed

\section{Fredholmness of the operator \boldmath{${\rm Op}(a)$}}
\label{sec:Fredholmness}
\subsection{Interpolation theorem}\label{sec:interpolation}
For a Banach space $X$, let $\cB(X)$ and $\cK(X)$ denote the Banach
algebra of all bounded linear operators and its ideal of all compact
operators on $X$, respectively.
\begin{theorem}\label{th:interpolation}
Let $p_j:\R^n\to[1,\infty]$, $j=0,1$, be a.e. finite measurable
functions, and let $p_\theta:\R^n\to[1,\infty]$ be defined for
$\theta\in[0,1]$ by
\[
\frac{1}{p_\theta(x)}=\frac{\theta}{p_0(x)}+\frac{1-\theta}{p_1(x)}\quad
(x\in\R^n).
\]
Suppose $A$ is a linear operator defined on $L^{p_0}(\R^n)\cup
L^{p_1}(\R^n)$.
\begin{enumerate}
\item[{\rm(a)}] If $A\in\cB(L^{p_j}(\R^n))$ for $j=0,1$, then
    $A\in\cB(L^{p_\theta(\cdot)}(\R^n))$ for all $\theta\in[0,1]$ and
\[
\|A\|_{\cB(L^{p_\theta(\cdot)}(\R^n))} \le 4
\|A\|_{\cB(L^{p_0(\cdot)}(\R^n))}^\theta
\|A\|_{\cB(L^{p_1(\cdot)}(\R^n))}^{1-\theta}.
\]

\item[{\rm(b)}]   If
    $A\in\cK(L^{p_0(\cdot)}(\R^n))$ and
    $A\in\cB(L^{p_1(\cdot)}(\R^n))$, then
    $A\in\cK(L^{p_\theta(\cdot)}(\R^n))$ for all $\theta\in(0,1)$.
\end{enumerate}
\end{theorem}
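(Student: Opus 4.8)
The plan is to prove Theorem~\ref{th:interpolation} by interpolating between the two constant-exponent spaces $L^{p_0}$ and $L^{p_1}$ inside the variable exponent space $L^{p_\theta(\cdot)}$, using an appropriate factorization of functions. For part~(a), the key tool is a H\"older-type inequality for variable Lebesgue spaces: if $1/p_\theta(x)=\theta/p_0(x)+(1-\theta)/p_1(x)$, then writing any $f\in L^{p_\theta(\cdot)}(\R^n)$ as $f=|f|^\theta\cdot(\operatorname{sgn} f)|f|^{1-\theta}$, one has $\||f|^\theta\|_{p_0(\cdot)/\theta}\approx\|f\|_{p_\theta(\cdot)}^\theta$ and similarly for the other factor, so that the generalized H\"older inequality in variable Lebesgue spaces gives $\|f\|_{p_\theta(\cdot)}\le C\||f|^\theta\|_{p_0(\cdot)/\theta}\||f|^{1-\theta}\|_{p_1(\cdot)/(1-\theta)}$ and, conversely, a matching lower bound (this is essentially \cite[Corollary~2.28 and Theorem~2.10]{CF13} type material, or \cite[Chap.~2]{DHHR11}). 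First I would record these norm-equivalence and H\"older facts with explicit constants (the factor $4$ in the statement comes precisely from iterating two applications of the variable-exponent H\"older inequality, whose constant is $2$). Then, given $f$ with $\|f\|_{p_\theta(\cdot)}\le 1$, I would factor $f=gh$ with $g\in L^{p_0(\cdot)}$, $h\in L^{p_1(\cdot)}$ suitably normalized; however, since $A$ is only assumed linear and defined on $L^{p_0}\cup L^{p_1}$, I cannot split $Af=Ag\cdot Ah$.

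The correct route is instead the \emph{Calder\'on product} / complex interpolation description of variable Lebesgue spaces. Concretely, I would invoke the known identity $L^{p_\theta(\cdot)}(\R^n)=[L^{p_0(\cdot)}(\R^n),L^{p_1(\cdot)}(\R^n)]_\theta$ for the Calder\'on--Lions complex interpolation functor, valid whenever the exponents satisfy \eqref{eq:exponents}; this is due to Diening--H\"ast\"o--Roudenko and is recorded in \cite[Theorem~7.1.2]{DHHR11}. Specializing $p_0(\cdot)$ and $p_1(\cdot)$ to the \emph{constant} exponents $p_0,p_1$ (as in the hypotheses of the theorem), the abstract interpolation property immediately yields part~(a): an operator bounded on both endpoint spaces is bounded on the complex interpolation space $L^{p_\theta(\cdot)}(\R^n)$, with the operator-norm bound controlled by the product of the endpoint norms raised to the powers $1-\theta$ and $\theta$, times the norm of the interpolation functor, which in Calder\'on's construction is $1$; the explicit constant $4$ then accommodates the equivalence-of-norms constants coming from identifying the Calder\'on space with $L^{p_\theta(\cdot)}$. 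I would state this cleanly rather than reprove the interpolation identity.

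For part~(b), I would combine part~(a) with a Krasnoselskii-type compact interpolation theorem. The standard abstract statement (Krasnoselskii, or its refinement by Cwikel) says: if $A$ is bounded $X_0\to X_0$ and $X_1\to X_1$ for a Banach couple $(X_0,X_1)$, and moreover \emph{compact} on $X_0$, then $A$ is compact on every complex interpolation space $[X_0,X_1]_\theta$ for $\theta\in(0,1)$, provided one of the endpoint spaces is reflexive or has some approximation property. Here, by \eqref{eq:exponents} and the remarks following it, $L^{p_0(\cdot)}$ and $L^{p_1(\cdot)}$ are reflexive and separable, so the hypotheses of Cwikel's theorem (\cite{Cwikel92}, see also \cite[\S 3.8]{BL76} for the classical case) are met; applying it with $X_0=L^{p_0(\cdot)}$, $X_1=L^{p_1(\cdot)}$, again using the identification $[X_0,X_1]_\theta=L^{p_\theta(\cdot)}$ from \cite[Theorem~7.1.2]{DHHR11}, yields that $A\in\cK(L^{p_\theta(\cdot)}(\R^n))$ for all $\theta\in(0,1)$.

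The main obstacle is not any hard estimate but a bookkeeping issue: ensuring that the abstract interpolation machinery applies in this variable-exponent setting, i.e. verifying that the pair $(L^{p_0(\cdot)},L^{p_1(\cdot)})$ is a genuine compatible Banach couple (with a common ambient space, e.g. $L^1_{\rm loc}$ or $L^1+L^\infty$) and that the Calder\'on complex interpolation space really coincides \emph{isomorphically}, with controlled constants, with $L^{p_\theta(\cdot)}$ — this is where the constant $4$ must be tracked carefully. The compactness transference in (b) additionally requires the reflexivity/separability of the endpoint spaces, which is exactly what \eqref{eq:exponents} buys us; I would flag explicitly that this is why the assumption \eqref{eq:exponents} is imposed throughout. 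Once these identifications are in hand, both parts follow from citing \cite[Theorem~7.1.2]{DHHR11} together with the classical complex interpolation and Cwikel compactness theorems, with the operator-norm inequality in (a) being a direct consequence of the logarithmic convexity built into the $[\cdot,\cdot]_\theta$ functor.
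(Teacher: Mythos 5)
Your proposal lands on essentially the same structural approach as the paper: identify $L^{p_\theta(\cdot)}(\R^n)$ with the complex interpolation space $[L^{p_0(\cdot)}(\R^n),L^{p_1(\cdot)}(\R^n)]_\theta$ via \cite[Theorem~7.1.2]{DHHR11}, cite the abstract boundedness interpolation theorem for (a), and cite a one-sided compactness interpolation theorem for (b). For (a) this is exactly what the paper does (it cites \cite[Corollary~7.1.4]{DHHR11} directly, which packages the same content). Your initial detour through the H\"older-type factorization $f=|f|^\theta\cdot(\operatorname{sgn} f)|f|^{1-\theta}$ is indeed a dead end for a linear $A$, and you correctly abandon it.

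The gap is in part (b). You invoke ``Krasnoselskii, or its refinement by Cwikel'' together with the hypothesis that ``one of the endpoint spaces is reflexive or has some approximation property.'' This is not the right tool here. Krasnoselskii's theorem concerns couples of ordinary $L^p$ spaces, and Cwikel's 1992 unconditional one-sided compactness theorem is for the \emph{real} interpolation method $(\cdot,\cdot)_{\theta,q}$, not the complex method $[\cdot,\cdot]_\theta$. For the complex method, one-sided compactness interpolation is \emph{not} known in general; it holds only under additional structural hypotheses on the couple, and plain reflexivity of the endpoints is not among the standard sufficient conditions. The paper's proof explicitly circumvents this by citing the Cobos--K\"uhn--Schonbeck theorem \cite[Theorem~3.2]{CKS92}, which asserts one-sided compactness for the complex method on couples of Banach \emph{lattices} satisfying the \emph{Fatou property}. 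Variable Lebesgue spaces are Banach lattices with the Fatou property (see \cite[p.~77]{DHHR11}), with no further restriction on the exponents $p_j$ beyond a.e.\ finiteness; note in particular that the theorem statement does \emph{not} assume condition \eqref{eq:exponents}, so reflexivity of $L^{p_j(\cdot)}(\R^n)$ is not even guaranteed and cannot be used as a crutch. Replacing your citation of Cwikel/Krasnoselskii by Cobos--K\"uhn--Schonbeck, and dropping the reflexivity remark, closes the gap and reproduces the paper's argument.
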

Part (a) is proved in \cite[Corollary~7.1.4]{DHHR11} under the
more general assumption that $p_j$ may take infinite values on
sets of positive measure (and in the setting of arbitrary measure
spaces). Part (b) was proved in \cite[Proposition~2.2]{RS08} under
the additional assumptions that $p_j$ satisfy
\eqref{eq:exponents}--\eqref{eq:Hoelder-infinity}. It follows
without these assumptions from a general interpolation theorem by
Cobos, K\"uhn, and Schonbeck \cite[Theorem~3.2]{CKS92} for the
complex interpolation method for Banach lattices satisfying the
Fatou property. Indeed, the complex interpolation space
$[L^{p_0(\cdot)}(\R^n),L^{p_1(\cdot)}(\R^n)]_{1-\theta}$ is
isomorphic to the variable Lebesgue space
$L^{p_\theta(\cdot)}(\R^n)$ (see \cite[Theorem~7.1.2]{DHHR11}),
and $L^{p_j(\cdot)}(\R^n)$ have the Fatou property (see
\cite[p.~77]{DHHR11}).
\subsection{Calculus of pseudodifferential operators}\label{sec:calculus}
Let $m\in\Z$ and $OPSO^m$ be the class of all pseudodifferential
operators $\Op(a)$ with $a\in SO^m$. By analogy with
\cite[Section~2]{G70} one can get the following {\em composition
formula}  (see also \cite[Theorem~6.2.1]{R98} and
\cite[Chap.~4]{RRS04}).
\begin{proposition}\label{pr:composition}
If $\Op(a_1)\in OPSO^{m_1}$ and $\Op(a_2)\in OPSO^{m_2}$, then
their product $\Op(a_1)\Op(a_2)=\Op(\sigma)$ belongs to
$OPSO^{m_1+m_2}$ and its symbol $\sigma$ is given by
\[
\sigma(x,\xi)=a_1(x,\xi)a_2(x,\xi)+c(x,\xi), \quad x,\xi\in\R^n,
\]
where $c\in SO_0^{m_1+m_2-1}$.
\end{proposition}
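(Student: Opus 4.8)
The plan is to deduce the formula from the classical H\"ormander calculus and then track how the slowly oscillating structure of $a_1$ and $a_2$ is passed on to the composition. Since $a_j\in SO^{m_j}\subset S_{1,0}^{m_j}$, the standard composition theorem for the classes $S_{1,0}^m$ (see e.g. \cite[Chap.~VI]{S93} or \cite{H67}) gives $\Op(a_1)\Op(a_2)=\Op(\sigma)$ on $C_0^\infty(\R^n)$ with $\sigma\in S_{1,0}^{m_1+m_2}$ and, for every $N\in\N$,
\[
\sigma(x,\xi)=\sum_{|\gamma|<N}\frac{(-i)^{|\gamma|}}{\gamma!}\,\partial_\xi^\gamma a_1(x,\xi)\,\partial_x^\gamma a_2(x,\xi)+r_N(x,\xi),\qquad r_N\in S_{1,0}^{m_1+m_2-N}.
\]
The term $\gamma=0$ equals $a_1(x,\xi)a_2(x,\xi)$, so setting $c:=\sigma-a_1a_2$ the whole statement reduces to showing that $c\in SO_0^{m_1+m_2-1}$: a short Leibniz computation gives $a_1a_2\in SO^{m_1+m_2}$ (in each term of $\partial_\xi^\alpha\partial_x^\beta(a_1a_2)$ with $\beta\neq0$ at least one factor is differentiated in $x$ and hence carries a constant vanishing at infinity), and then $\sigma=a_1a_2+c\in SO^{m_1+m_2}$ because $SO_0^{m_1+m_2-1}\subset SO^{m_1+m_2}$.

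Already $N=1$ suffices: then $c=r_1$, and everything comes down to proving $r_1\in SO_0^{m_1+m_2-1}$. If one prefers to take $N$ large, then $c$ is the explicit finite sum $\sum_{1\le|\gamma|<N}\frac{(-i)^{|\gamma|}}{\gamma!}\partial_\xi^\gamma a_1\,\partial_x^\gamma a_2$ plus $r_N$, and each summand is immediately seen to be in $SO_0^{m_1+m_2-1}$: the operator $\partial_\xi^\gamma$ maps $SO^\mu$ into $SO^{\mu-|\gamma|}$, the operator $\partial_x^\gamma$ with $\gamma\neq0$ maps $SO^\mu$ into $SO_0^\mu$, and a Leibniz estimate gives $SO^\mu\cdot SO_0^\nu\subset SO_0^{\mu+\nu}$ (in every term the factor coming from the $SO_0$ symbol keeps a vanishing-at-infinity constant), so for $|\gamma|\ge1$ one has $\partial_\xi^\gamma a_1\cdot\partial_x^\gamma a_2\in SO_0^{m_1-|\gamma|+m_2}\subset SO_0^{m_1+m_2-1}$. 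In every version the real work is the remainder term.

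The main obstacle is exactly this: to show that $r_N$ belongs to $SO_0^{m_1+m_2-1}$ and not merely to $S_{1,0}^{m_1+m_2-N}$, i.e. that \emph{all} its seminorm constants, including those carrying no $x$-derivative, tend to $0$ as $x\to\infty$. I would use the usual oscillatory-integral form of the remainder, which up to normalising constants reads
\[
r_N(x,\xi)=\sum_{|\gamma|=N}\int_0^1(1-t)^{N-1}\iint e^{-iy\cdot\eta}\,\partial_\xi^\gamma a_1(x,\xi+t\eta)\,\partial_x^\gamma a_2(x+y,\xi)\,dy\,d\eta\,dt
\]
(the inner double integral being made absolutely convergent in the standard way, by integrating by parts in $y$ and in $\eta$), and exploit that its integrand always contains $\partial_x^\gamma a_2$ with $|\gamma|=N\ge1$, hence the factor $C_{0,\gamma}(x+y)$, which is small when $|x+y|$ is large. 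Splitting the $y$-integration into the regions $|y|\le|x|/2$ and $|y|>|x|/2$, on the first region $|x+y|\ge|x|/2$, so $C_{0,\gamma}(x+y)$ is uniformly $o(1)$ as $x\to\infty$; on the second region the decay $\langle y\rangle^{-2M}$ produced by the integrations by parts in $\eta$ bounds the contribution by $O(\langle x\rangle^{-M'})$ with $M'$ arbitrarily large. The same splitting applied after differentiating $r_N$ in $x$ and $\xi$ (a derivative in $x$ falling on $a_1(x,\xi+t\eta)$ only produces a further harmless $SO$-factor, while one falling on $a_2(x+y,\xi)$ retains the vanishing-at-infinity factor) shows that every seminorm constant of $r_N$ vanishes at infinity. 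Hence $r_N\in SO_0^{m_1+m_2-N}\subset SO_0^{m_1+m_2-1}$, so $c\in SO_0^{m_1+m_2-1}$ and $\sigma=a_1a_2+c\in SO^{m_1+m_2}$, which proves the proposition.
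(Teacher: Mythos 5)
The paper does not actually prove this proposition: it only notes that the statement follows ``by analogy with \cite[Section~2]{G70}'' and points to \cite[Theorem~6.2.1]{R98} and \cite[Chap.~4]{RRS04}. So there is no in-text argument to compare with; your proposal is a reconstruction of what those references contain.

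Your reconstruction is correct and follows the standard route. The reduction is clean: the explicit terms $\partial_\xi^\gamma a_1\cdot\partial_x^\gamma a_2$ with $|\gamma|\ge 1$ are handled by the (correct) observations that $\partial_x^\gamma$ with $\gamma\ne 0$ sends $SO^\mu$ into $SO_0^\mu$ and that $SO^\mu\cdot SO_0^\nu\subset SO_0^{\mu+\nu}$; and you correctly identify the only nontrivial point, namely that the oscillatory-integral remainder $r_N$ lies in $SO_0$ and not merely in $S_{1,0}$. The splitting of the $y$-integration into $|y|\le|x|/2$ (where $|x+y|\ge|x|/2$ forces $C_{0,\gamma}(x+y)\to 0$) and $|y|>|x|/2$ (where the $\langle y\rangle^{-2M}$ decay from integration by parts in $\eta$ gives arbitrary polynomial decay in $|x|$) is exactly the device needed, and your remark that $x$- and $\xi$-derivatives applied to $r_N$ do not destroy this structure (because the factor $\partial_x^{\gamma+\beta}a_2$ with $\gamma+\beta\ne 0$ always retains a vanishing constant) closes the argument. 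Two tiny points of rigor worth flagging when you write this out in full: (i) after integrating by parts one must invoke Peetre's inequality $\langle\xi+t\eta\rangle^s\le C\langle\xi\rangle^s\langle\eta\rangle^{|s|}$ and choose the number of integrations by parts in $y$ large enough to absorb the resulting $\langle\eta\rangle$-powers, so the two rounds of integration by parts interact and the exponents need to be tracked; (ii) the symbol $\sigma$ a priori comes out in $S_{1,0}^{m_1+m_2}$ from the classical calculus, and it is the decomposition $\sigma=a_1a_2+c$ with $a_1a_2\in SO^{m_1+m_2}$ and $c\in SO_0^{m_1+m_2-1}\subset SO^{m_1+m_2}$ that upgrades $\sigma$ to $SO^{m_1+m_2}$, which you state but is worth making explicit. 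Neither affects the correctness of the approach.
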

\begin{proposition}\label{pr:compactness}
Let $1<q<\infty$. If $c\in SO_0^{-1}$, then
$\Op(c)\in\cK(L^q(\R^n))$.
\end{proposition}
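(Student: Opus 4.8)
The plan is to reduce the claim to the classical compactness criterion for pseudodifferential operators with symbols that decay in \emph{both} variables, and to do so by approximating a symbol $c\in SO_0^{-1}$ by truncated symbols that have compact support in $x$ and to which a known compactness result applies. First I would record the structural features of $c\in SO_0^{-1}$: by definition $|\partial_\xi^\alpha\partial_x^\beta c(x,\xi)|\le C_{\alpha\beta}(x)\langle\xi\rangle^{-1-|\alpha|}$ with $C_{\alpha\beta}(x)\to 0$ as $x\to\infty$ for \emph{all} multi-indices $\alpha,\beta$ (including $\beta=0$). In particular $c\in S_{1,0}^{-1}$, so $\Op(c)$ is bounded on $L^q(\R^n)$ for $1<q<\infty$ by the standard Calder\'on--Vaillancourt/Hörmander $L^p$-boundedness theory (alternatively by Theorem~\ref{th:boundedness} with constant exponent $p=q$, taking $\rho=1$, $\delta=0$, $m=-1<0=n(\rho-1)$).

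Next I would introduce a smooth cutoff $\chi\in C_0^\infty(\R^n)$ with $\chi\equiv 1$ near the origin, set $\chi_R(x):=\chi(x/R)$, and write
\[
\Op(c)=\Op(\chi_R c)+\Op\big((1-\chi_R)c\big).
\]
For the first term, the symbol $\chi_R(x)c(x,\xi)$ has compact support in $x$ and satisfies $|\partial_\xi^\alpha\partial_x^\beta(\chi_R c)(x,\xi)|\le C'_{\alpha\beta}\langle\xi\rangle^{-1-|\alpha|}$; a symbol of negative order that is compactly supported in $x$ gives a compact operator on $L^q(\R^n)$. (This is the classical fact: the Schwartz kernel is then $O(|x-y|^{-n+\varepsilon})$ with compact $x$-support and rapid decay in $|x-y|$ off the diagonal, so $\Op(\chi_R c)$ factors through a compact embedding, e.g.\ $L^q$ into a fractional Sobolev space restricted to a compact set; this is exactly the type of statement proved by Grushin~\cite{G70} and it is the content we are entitled to invoke ``by analogy with \cite{G70}''.) For the second term I would estimate its operator norm: since $(1-\chi_R)(x)$ is supported in $|x|\ge R$, the symbol $(1-\chi_R)c$ has $S_{1,0}^{-1}$ seminorms bounded by $\sup_{|x|\ge R}C_{\alpha\beta}(x)+O(1/R)$ over finitely many $\alpha,\beta$ entering the $L^q$-boundedness estimate, and by \eqref{eq:SO} these tend to $0$ as $R\to\infty$. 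Hence $\|\Op((1-\chi_R)c)\|_{\cB(L^q(\R^n))}\to 0$.

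Putting the two pieces together, $\Op(c)$ is the norm limit in $\cB(L^q(\R^n))$ of the compact operators $\Op(\chi_R c)$, and since $\cK(L^q(\R^n))$ is closed in $\cB(L^q(\R^n))$, we conclude $\Op(c)\in\cK(L^q(\R^n))$. I expect the main obstacle to be the first term: making precise that a pseudodifferential operator of order $-1$ with symbol compactly supported in $x$ is compact on $L^q$. The cleanest route is to bound the Schwartz kernel $K(x,y)$ of $\Op(\chi_R c)$ using the symbol estimates (integration by parts in $\xi$ yields $|K(x,y)|\lesssim \langle x-y\rangle^{-N}$ for $|x-y|\ge 1$ and $|K(x,y)|\lesssim |x-y|^{-(n-1)}$ for $|x-y|\le 1$, uniformly, with $x$ confined to a fixed compact set), and then to observe that such a kernel defines an operator that maps $L^q$ into a space compactly embedded in $L^q$ over the relevant compact region — equivalently, one approximates $K$ by bounded compactly supported kernels (which give finite-rank-limit, hence compact, operators) in the $L^q\to L^q$ operator norm via Young/Schur-type estimates, again using that $\cK$ is norm-closed. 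This is routine but is the step requiring the most care; everything else is bookkeeping with symbol seminorms.
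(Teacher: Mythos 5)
Your proof is correct, but it takes a genuinely different route from the paper. The paper's argument is a three-liner: $\Op(c)$ is bounded on every $L^q(\R^n)$, $1<q<\infty$, by Theorem~\ref{th:boundedness} with constant exponent; $\Op(c)\in\cK(L^2(\R^n))$ by Grushin's Theorem~3.2 in \cite{G70}; and Krasnoselskii interpolation (Theorem~\ref{th:interpolation}(b) with constant exponents) then transports compactness from $L^2$ to every $L^q$. You instead prove compactness on $L^q$ directly by truncating in $x$: $\Op(c)=\Op(\chi_R c)+\Op((1-\chi_R)c)$, with the first term compact because a negative-order symbol with compact $x$-support gives a map into $W^{1,q}$ of a bounded set (hence compact into $L^q$ by Rellich), and the second term small in operator norm because the relevant $S_{1,0}^{-1}$ seminorms of $(1-\chi_R)c$ tend to $0$ as $R\to\infty$ owing to $C_{\alpha\beta}(x)\to 0$.

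A few remarks on the comparison. The paper's proof is more modular: it treats Grushin's $L^2$ compactness as a black box and reuses the interpolation machinery that the paper must state anyway for the variable-exponent Fredholm argument, so nothing extra is proved. Your proof is more self-contained and has the pedagogical advantage of making explicit \emph{where} the $SO_0$ decay condition enters --- namely, it is exactly what forces $\|\Op((1-\chi_R)c)\|_{\cB(L^q)}\to 0$; for a generic $c\in S_{1,0}^{-1}$ such as $c(x,\xi)=\langle\xi\rangle^{-1}$ the tail does not go to zero and $\Op(c)$ is indeed not compact. The price you pay is that you must invoke (and, for full rigor, justify) two further standard facts: (i) the $L^q$-operator norm of a $\psi$do is controlled by finitely many symbol seminorms, and (ii) a negative-order $\psi$do with compactly supported symbol in $x$ is compact on $L^q$; both are classical but are precisely the ``routine but careful'' steps you flag. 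One tiny slip: you write ``$m=-1<0=n(\rho-1)$'' when invoking Theorem~\ref{th:boundedness}, but that theorem requires $m=n(\rho-1)$ exactly; the correct justification is simply the inclusion $S_{1,0}^{-1}\subset S_{1,0}^0$, after which the theorem applies with $m=0$. This does not affect the argument.
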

\begin{proof}
From Theorem~\ref{th:boundedness} it follows that
$\Op(c)\in\cB(L^q(\R^n))$ for all constant exponents
$q\in(1,\infty)$. By \cite[Theorem~3.2]{G70},
$\Op(c)\in\cK(L^2(\R^n))$. Hence, by the Krasnoselskii
interpolation theorem (Theorem~\ref{th:interpolation}(b) for
constant $p_j$ with $j=0,1$), $\Op(c)\in\cK(L^q(\R^n))$ for all
$q\in(1,\infty)$.
\end{proof}
\subsection{Proof of Theorem~\ref{th:sufficiency}}\label{sec:sufficiency}
The idea of the proof is borrowed from \cite[Theorem~3.4]{G70} and
\cite[Theorem~6.1]{RS08}. Let $\varphi\in
C_0^\infty(\R^n\times\R^n)$ be such that $\varphi(x,\xi)=1$ if
$|x|+|\xi|\le 1$ and $\varphi(x,\xi)=0$ if $|x|+|\xi|\ge 2$. For
$R>0$, put
\[
\varphi_R(x,\xi)=\varphi(x/R,\xi/R),\quad x,\xi\in\R^n.
\]
From \eqref{eq:sufficiency-1} it follows that there exists an
$R>0$ such that
\[
\inf_{|x|+|\xi|\ge R}|a(x,\xi)|>0.
\]
Then it is not difficult to check that
\[
b_R(x,\xi):=\left\{\begin{array}{lll}
\displaystyle\frac{1-\varphi_R(x,\xi)}{a(x,\xi)} &\mbox{if}& |x|+|\xi|\ge R,\\
0&\mbox{if}& |x|+|\xi|<R,
\end{array}\right.
\]
belongs to $SO^0$. It is also clear that $\varphi_R\in SO^0$.

From Proposition~\ref{pr:composition} it follows that there exists
a function $c\in SO_0^{-1}$ such that
\begin{equation}\label{eq:sufficiency-2}
\Op(ab_R)-\Op(a)\Op(b_R)=\Op(c).
\end{equation}
On the other hand, since
\[
a(x,\xi)b_R(x,\xi)=1-\varphi_R(x,\xi),\quad x,\xi\in\R^n,
\]
we have
\begin{equation}\label{eq:sufficiency-3}
\Op(ab_R)=\Op(1-\varphi_R)=I-\Op(\varphi_R).
\end{equation}
Combining \eqref{eq:sufficiency-2}--\eqref{eq:sufficiency-3}, we
get
\begin{equation}\label{eq:sufficiency-4}
I-\Op(a)\Op(b_R)=\Op(\varphi_R)+\Op(c)=\Op(\varphi_R+c).
\end{equation}

Since $p\in\cM^*(\R^n)$, there exist $p_0\in(1,\infty)$,
$\theta\in(0,1)$, and $p_1\in\cM(\R^n)$ such that
\[
\frac{1}{p(x)}=\frac{\theta}{p_0}+\frac{1-\theta}{p_1(x)}
\quad(x\in\R^n).
\]
From Theorem~\ref{th:boundedness} we conclude that all
pseudodifferential operators considered above are bounded on
$L^{p_0}(\R^n)$, $L^{p(\cdot)}(\R^n)$, and $L^{p_1(\cdot)}(\R^n)$.
Since $\varphi_R+c\in SO_0^{-1}$, from
Proposition~\ref{pr:compactness} it follows that
$\Op(\varphi_R+c)\in\cK(L^{p_0}(\R^n))$. Then, by
Theorem~\ref{th:interpolation}(b),
$\Op(\varphi_R+c)\in\cK(L^{p(\cdot)}(\R^n))$. Therefore, from
\eqref{eq:sufficiency-4} it follows that $\Op(b_R)$ is a right
regularizer for $\Op(a)$. Analogously it can be shown that
$\Op(b_R)$ is also a left regularizer for $\Op(a)$. Thus $\Op(a)$
is Fredholm on $L^{p(\cdot)}(\R^n)$. \qed



\begin{thebibliography}{99}
\bibitem{AH90}
J. \'Alvarez and J. Hounie,
\textit{Estimates for the kernel and continuity properties of pseudo-differential operators.}
\href{http://dx.doi.org/10.1007/BF02387364}
{Ark. Mat. \textbf{28} (1990), 1–-22}.

\bibitem{AP94}
J.~\'Alvarez and C.~P\'erez,
\textit{Estimates with $A_\infty$ weights for various singular integral operators.}
Boll. Un. Mat. Ital. A (7) \textbf{8} (1994), 123–-133.

\bibitem{CCF07}
C.~Capone, D.~Cruz-Uribe, and A.~Fiorenza,
\textit{The fractional maximal operator and fractional integrals on variable $L^p$ spaces.}
\href{http://projecteuclid.org/euclid.rmi/1204128298}
{Rev. Mat. Iberoamericana \textbf{23} (2007), 743--770}.

\bibitem{CKS92}
F.~Cobos, T.~K\"uhn, and T.~Schonbek,
\textit{One-sided compactness results for Aronszajn-Gagliardo functors.}
\href{http://dx.doi.org/10.1016/0022-1236(92)90049-O}
{J. Funct. Analysis \textbf{106} (1992), 274--313}.

\bibitem{CFN03}
D.~Cruz-Uribe, A.~Fiorenza, and C.~J.~Neugebauer,
\textit{The maximal function on variable $L^{p}$ spaces.}
Ann. Acad. Sci. Fenn. Math. \textbf{28} (2003), 223-–238.

\bibitem{CFN04}
D.~Cruz-Uribe, A.~Fiorenza, and C.~J.~Neugebauer,
\textit{Corrections to: ``The maximal function on variable $L^{p}$ spaces".}
Ann. Acad. Sci. Fenn. Math. \textbf{29} (2004), 247-–249.

\bibitem{D04}
L.~Diening,
\textit{Maximal function on generalized Lebesgue spaces $L^{p(\cdot)}$.}
Math. Inequal. Appl. \textbf{7} (2004), 245-–253.

\bibitem{D05}
L.~Diening,
\textit{Maximal function on Musielak-Orlicz spaces and generlaized Lebesgue spaces.}
\href{http://dx.doi.org/10.1016/j.bulsci.2003.10.003}
{Bull. Sci. Math.  \textbf{129}  (2005),  657--700}.

\bibitem{DHHR11}
L.~Diening, P.~Harjulehto, P.~H\"ast\"o, and M.~R\r{u}\v zi\v cka,
\textit{Lebesgue and Sobolev Spaces with Variable Exponents}.
\href{http://dx.doi.org/10.1007/978-3-642-18363-8}
{Lecture Notes in Mathematics \textbf{2017}, 2011}.

\bibitem{DR03}
L.~Dieinig and M.~R\r{u}\v zi\v cka,
\textit{Calder\'on-Zygmund operators on generalized Lebesgue spaces $L^{p(\cdot)}$
and problems related to fluid dynamics}.
\href{http://dx.doi.org/10.1515/crll.2003.081}
{J. Reine Angew. Math., \textbf{563} (2003), 197--220}.

\bibitem{G70}
V.~V.~Grushin,
\textit{Pseudodifferential operators on $\R^n$ with bounded symbols}.
\href{http://dx.doi.org/10.1007/BF01075240}
{Funct. Anal. Appl. \textbf{4} (1970), 202--212}.

\bibitem{H67}
L.~H\"ormander,
\textit{Pseudo-differential operators and hypoelliptic equations.}
In: ``Singular integrals (Proc. Sympos. Pure Math., Vol. X, Chicago, Ill., 1966)",
pp. 138–-183. Amer. Math. Soc., Providence, R.I., 1967.

\bibitem{KS11-example}
A.~Yu.~Karlovich and I.~M.~Spitkovsky,
\textit{On an interesting class of variable exponents},
submitted.

\bibitem{KR91}
O.~Kov\'a{\v c}ik and J.~R\'akosn\'ik,
\textit{On spaces $L\sp {p(x)}$ and $W\sp {k,p(x)}$}.
\href{http://hdl.handle.net/10338.dmlcz/102493}
{Czechoslovak Math. J. \textbf{41(116)} (1991), no. 4, 592--618}.

\bibitem{L05}
A.~K.~Lerner, \textit{Some remarks on the Hardy-Littlewood maximal
function on variable $L^p$ spaces}.
\href{http://dx.doi.org/10.1007/s00209-005-0818-5}
{Math. Z. \textbf{251} (2005), no. 3, 509–-521}.

\bibitem{MRS11}
N.~Michalowski, D.~Rule, and W.~Staubach,
\textit{Weighted $L^p$ boundedness of pseudodifferential operators and applications.}
\href{http://dx.doi.org/10.4153/CMB-2011-122-7}
{Canad. Math. Bull. (2011), doi:10.4153/CMB-2011-122-7}.

\bibitem{M82}
N.~Miller,
\textit{Weighted Sobolev spaces and pseudodifferential
operators with smooth symbols.}
\href{http://dx.doi.org/10.1090/S0002-9947-1982-0637030-4}
{Trans. Amer. Math. Soc. \textbf{269} (1982), 91--109}.

\bibitem{N04}
A.~Nekvinda,
\textit{Hardy-Littlewood maximal operator on $L^{p(x)}(\R^n)$.}
Math. Inequal. Appl. \textbf{7} (2004), 255--265.

\bibitem{N08}
A.~Nekvinda,
\textit{Maximal operator on variable Lebesgue spaces for almost monotone radial exponent}.
\href{http://dx.doi.org/10.1016/j.jmaa.2007.04.047}
{J. Math. Anal. Appl. \textbf{337} (2008), 1345-–1365}.

\bibitem{PR01}
L.~Pick and M.~R\r{u}\v zi\v cka,
\textit{An example of a space of Lp(x) on which the Hardy-Littlewood maximal operator is not bounded}.
\href{http://dx.doi.org/10.1016/S0723-0869(01)80023-2}
{Expo. Math. \textbf{19} (2001), 369--371}.

\bibitem{R98}
V.~S.~Rabinovich,
\textit{An itroductory course on pseudodifferential operators}.
Textos de Matem\'atica, Instituto Superior T\'ecnico, Lisboa, 1998.

\bibitem{RRS04}
V.~S.~Rabinovich, S.~Roch, and B.~Silbermann,
\textit{Limit Operators and Their Applications in Operator Theory}.
Operator Theory: Advances and Applications, vol. 150. Birkh\"auser, Basel, 2004.

\bibitem{RS08}
V.~S.~Rabinovich and S.~G.~Samko,
\textit{Boundedness and Fredholmness of pseudodifferential operators in variable exponent spaces.}
\href{http://dx.doi.org/10.1007/s00020-008-1566-9}
{Integral Equations Operator Theory \textbf{60} (2008), 507--537}.

\bibitem{RS11}
V.~S.~Rabinovich and S.~G.~Samko,
\textit{Pseudodifferential operators approach to singular integral operators in weighted
variable exponent Lebesgue spaces on Carleson curves.}
\href{http://dx.doi.org/10.1007/s00020-010-1848-x} {Integral
Equations Operator Theory \textbf{69} (2011), 405--444}.

\bibitem{S93}
E.~Stein,
\textit{Harmonic Analysis: Real-Variable Methods, Orthogonality, and Oscillatory Integrals.}
Princeton Uinversity Press, Princeton, NJ, 1993.
\end{thebibliography}
\end{document}